\documentclass[12pt]{amsart}
\usepackage{amsfonts}

\usepackage{amsmath}
\usepackage{graphicx}


\setcounter{MaxMatrixCols}{10}
\setlength{\textwidth}{16truecm}\oddsidemargin=-0.1truecm
\evensidemargin=-0.1truecm \setlength{\textheight}{20cm}
\newtheorem{theorem}{Theorem}

\newtheorem{corollary}[theorem]{Corollary}

\newtheorem{example}[theorem]{Example}

\newtheorem{proposition}[theorem]{Proposition}

\begin{document}
\title{Convergence of rational Bernstein operators}
\author[H. Render]{Hermann Render}
\address{School of Mathematical Sciences, University College Dublin, Dublin 4,
Ireland.}
\email{hermann.render@ucd.ie}
\thanks{}
\thanks{2000 Mathematics Subject Classification: \emph{Primary: 41A20 , Secondary:
41A36}}
\thanks{Key words and phrases: Rational approximants, Bernstein operator, positive
operator}

\begin{abstract}
In this paper we discuss convergence properties and error estimates of
rational Bernstein operators introduced by P. Pi\c{t}ul and P.
Sablonni\`{e}re. It is shown that the rational Bernstein operators $R_{n}$
converge to the identity operator if and only if $\Delta _{n},$ the maximal
difference between two consecutive nodes of $R_{n},$ is converging to zero
for $n\rightarrow \infty .$ Error estimates in terms of $\Delta _{n}$ are
provided. Moreover a Voronovskaja theorem is presented which is based on the
explicit computation of higher order moments for the rational Bernstein
operator.
\end{abstract}

\maketitle


\section{Introduction}

Let $C\left[ 0,1\right] $ be the set of all continuous real-valued functions
on the interval $\left[ 0,1\right] $. The classical Bernstein operator $%
B_{n}:C\left[ 0,1\right] \rightarrow C\left[ 0,1\right] \mathcal{\ }$ is
defined by 
\begin{equation*}
B_{n}f\left( x\right) =\sum_{k=0}^{n}f\left( \frac{k}{n}\right) \binom{n}{k}%
x^{k}\left( 1-x\right) ^{n-k},
\end{equation*}
see e.g. \cite{Davis}, \cite{Lore86}. In \cite{PiSa09}, P. Pi\c{t}ul and P.
Sablonni\`{e}re introduced \emph{rational Bernstein operators} which are
positive operators of the form 
\begin{equation}
R_{n}f\left( x\right) :=\sum_{k=0}^{n}f\left( x_{n,k}\right) \overline{w}%
_{n,k}\binom{n}{k}\frac{x^{k}\left( 1-x\right) ^{n-k}}{Q_{n-1}\left(
x\right) }  \label{defRn}
\end{equation}
where $Q_{n-1}\left( x\right) $ is a given strictly positive polynomial over 
$\left[ 0,1\right] $ of degree $\leq n-1.$ Further it is assumed that $%
Q_{n-1}$ has two additional properties: (i) the Bernstein coefficients $%
w_{n-1,k}$ in the representation 
\begin{equation}
Q_{n-1}\left( x\right) =\sum_{k=0}^{n-1}w_{n-1,k-1}\binom{n-1}{k}x^{k}\left(
1-x\right) ^{n-1-k}  \label{eqdefQn}
\end{equation}
are strictly positive and (ii) the sequence $w_{n-1,k},k=0,...,n-1$
satisfies the inequality 
\begin{equation}
\frac{w_{n-1,k-1}w_{n-1,k+1}}{w_{n-1,k}^{2}}<\left( \frac{k+1}{k}\right)
\left( \frac{n-k}{n-k-1}\right)  \tag{W}  \label{eqw}
\end{equation}
for $k=1,...,n-1.$ Then, according to the results in \cite{PiSa09}, there
exist positive weights $\overline{w}_{n,k},k=0,...,n,$ and increasing nodes $%
0=x_{n,0}<x_{n,1}<...<x_{n,n}=1$ such that $R_{n}$ reproduces the constant
function $e_{0}\left( x\right) =1$ and the linear function $e_{1}\left(
x\right) =x,$ i.e. that 
\begin{equation}
R_{n}e_{j}=e_{j}\text{ for }j=0,1.  \label{eqfix}
\end{equation}
The \emph{weights} $\overline{w}_{n,k}$ and the \emph{nodes} $x_{n,k}$ are
uniquely defined through the condition (\ref{eqfix}) and they are given by
the formula 
\begin{eqnarray*}
x_{n,k} &=&\frac{kw_{n,k-1}}{kw_{n,k-1}+\left( n-k\right) w_{n,k}}\text{ for 
}k=1,..,n-1\text{ } \\
\overline{w}_{n,k} &=&\frac{k}{n}w_{n,k-1}+\left( 1-\frac{k}{n}\right)
w_{n,k}\text{ for }k=1,...,n-1
\end{eqnarray*}
and the conditions $x_{n,0}=0$ and $x_{n,n}=1$ and $w_{n,0}=Q_{n-1}\left(
0\right) $ and $w_{n,n}=Q_{n-1}\left( 1\right) .$ It was shown in \cite
{PiSa09} that the rational Bernstein operators $R_{n}$ have the same shape
preserving properties as the classical Bernstein operator $B_{n}.$ Moreover
it was proved that $R_{n}$ converges to the identity operator and that a
Voronovskaja-type theorem holds under the additional assumption that there
exists a positive continuous function $\varphi $ such that 
\begin{equation*}
w_{n-1,k}=\varphi \left( \frac{k}{n-1}\right) \binom{n-1}{k}\text{ for }%
k=0,...,n-1
\end{equation*}
for all natural numbers $n$. The main purpose of this article is to study
the convergence of the rational Bernstein operators in the general case. Our
main result states that the operators $R_{n}$ converge to the identity
operator if and only if 
\begin{equation}
\Delta _{n}=\sup_{k=0,..,n-1}\left| x_{n,k+1}-x_{n,k}\right|
\label{eqDeltan}
\end{equation}
converges to $0.$ The main innovation in the present article is the
computation and estimation of the moments 
\begin{equation*}
R_{n}\left( e_{1}-x\right) ^{r}\left( x\right) \text{ and }R_{n}\left(
e_{r}\right) \left( x\right) -x^{r}
\end{equation*}
for the rational Bernstein operator $R_{n}$ where $e_{r}\left( x\right)
=x^{r}.$ For example, we shall prove the inequality 
\begin{equation*}
\left| R_{n}\left( e_{2}\right) \left( x\right) -x^{2}\right| \leq
\sup_{0\leq k\leq n-1}\left| x_{n,k+1}-x_{n,k}\right| \cdot x\left(
1-x\right)
\end{equation*}
which implies the convergence of $R_{n}$ to the identity operator provided
that $\Delta _{n}\rightarrow 0.$ Convergence results and error estimates of
O. Shisha and B. Mond for positive operators are used for explicit error
estimates. Results of R.G. Mamedov lead to a Voronovskaja theorem for
rational Bernstein operators in the general setting. We shall illustrate the
results by examples which are of different type as those in \cite{PiSa09}.

The paper is organized as follows: in the second section we shall recall
briefly the basic construction of the rational Bernstein operators as given
in \cite{PiSa09}. We shall show that there are many natural examples of
rational Bernstein operators: starting with nodes $%
0=x_{n,0}<x_{n,1}<...<x_{n,n-1}<x_{n,n}=1$ and a positive constant $\gamma
_{n-1,0}>0$ we define 
\begin{equation*}
\gamma _{n-1,k}:=\gamma _{n-1,0}\prod\limits_{l=1}^{k}\frac{1-x_{n,l}}{%
x_{n,l}}
\end{equation*}
for $k=1,...,n$ and $Q_{n-1}\left( x\right) :=\sum_{k=0}^{n-1}\gamma
_{n-1,k}x^{k}\left( 1-x\right) ^{n-1-k}.$ Then $Q_{n-1}$ satisfies property
(W) and 
\begin{equation*}
R_{n}f\left( x\right) =\sum_{k=0}^{n}f\left( x_{n,k}\right) \left( \gamma
_{n-1,k}+\gamma _{n-1,k-1}\right) \frac{x^{k}\left( 1-x\right) ^{n-k}}{%
Q_{n-1}\left( x\right) }
\end{equation*}
is a rational Bernstein operator $R_{n}$ fixing $e_{0}$ and $e_{1}$. In
Section 3 we compute the expressions $R_{n}\left( e_{r}\right) \left(
x\right) -x^{r}$ explicitly and obtain the above-mentioned criterion for the
convergence of $R_{n}.$ Section 4 is devoted to error estimates. In Section
5 we prove a Voronovskaja result. In Section 6 we discuss the special case
of rational Bernstein operators of \cite{PiSa09} and improve some results.
Further we present a sequence of rational Bernstein operators $R_{n}$
converging to the identity operator where the polynomials $Q_{n}\left(
x\right) $ converges pointwise to a \emph{discontinuous} function. In the
final Section 7, we shall comment on links between rational Bernstein
operators and general results about Bernstein operators fixing two functions
in the framework of extended Chebyshev systems.

By $C^{r}\left[ 0,1\right] $ we shall denote the set of all $r$ times
continuously differentiable functions on the unit interval $\left[ 0,1\right]
$ and $\mathbb{N}$ will denote the set of all natural numbers.

\section{Rational Bernstein operators}

For convenience of the reader we recall the basic construction of the
rational Bernstein operator $R_{n}$ as outlined in \cite{PiSa09}. Let $%
Q_{n-1}$ be a polynomial of degree $\leq n-1.$ Instead of the representation
(\ref{eqdefQn}) it is more convenient to work with 
\begin{equation}
Q_{n-1}\left( x\right) =\sum_{k=0}^{n-1}\gamma _{n-1,k}x^{k}\left(
1-x\right) ^{n-1-k}  \label{eqQrep}
\end{equation}
so $\gamma _{n-1,k}=w_{n-1,k}\binom{n-1}{k}.$ Since $x^{k}\left( 1-x\right)
^{n-1-k}=x^{k}\left( 1-x\right) ^{n-k}+x^{k+1}\left( 1-x\right) ^{n-1-k}$ we
infer that 
\begin{equation*}
Q_{n-1}\left( x\right) =\sum_{k=0}^{n}\left( \gamma _{n-1,k}+\gamma
_{n-1,k-1}\right) x^{k}\left( 1-x\right) ^{n-k}
\end{equation*}
with the convention that $\gamma _{n-1,-1}=0$ and $\gamma _{n-1,n}=0.$ In
view of (\ref{defRn}) the requirement $R_{n}1=1$ is then equivalent to 
\begin{equation*}
Q_{n-1}\left( x\right) =\sum_{k=0}^{n}\overline{w}_{n,k}\binom{n}{k}%
x^{k}\left( 1-x\right) ^{n-k}
\end{equation*}
and we conclude that 
\begin{equation*}
\overline{w}_{n,k}\binom{n}{k}=\gamma _{n-1,k}+\gamma _{n-1,k-1}.
\end{equation*}
Further we want that $R_{n}e_{1}=e_{1}$ for the linear function $e_{1}\left(
x\right) =x$ which is equivalent to the identity 
\begin{equation}
xQ_{n-1}\left( x\right) =\sum_{k=0}^{n}x_{n,k}\cdot \overline{w}_{n,k}\binom{%
n}{k}x^{k}\left( 1-x\right) ^{n-k}.  \label{eq1}
\end{equation}
Inserting $x=0$ implies that that $x_{n,0}=0$. From the identity 
\begin{equation*}
xQ_{n-1}\left( x\right) =\sum_{k=0}^{n-1}\gamma _{n-1,k}\cdot x^{k+1}\left(
1-x\right) ^{n-1-k}=\sum_{k=1}^{n}\gamma _{n-1,k-1}x^{k}\left( 1-x\right)
^{n-k}
\end{equation*}
and (\ref{eq1}) we infer that for $k=1,...n$ 
\begin{equation}
x_{n,k}=\frac{\gamma _{n-1,k-1}}{\overline{w}_{n,k}}=\frac{\gamma _{n-1,k-1}%
}{\gamma _{n-1,k}+\gamma _{n-1,k-1}}=\frac{\frac{\gamma _{n-1,k-1}}{\gamma
_{n-1,k}}}{1+\frac{\gamma _{n-1,k-1}}{\gamma _{n-1,k}}}.  \label{eqxnk}
\end{equation}
Hence, given the polynomial $Q_{n-1}\left( x\right) ,$ there is \emph{at
most one} choice for the nodes $x_{n,k}$ and the weights $\overline{w}_{n,k}$
such that $R_{n}$ fixes $e_{0}$ and $e_{1}.$ However, in general the numbers 
$x_{n,k}$ defined by (\ref{eqxnk}) are not in the interval $\left[ 0,1\right]
,$ and they are in general not increasing numbers, for see Example \ref
{ThmExamp} in Section 6. From formula (\ref{eqxnk}) and the fact that $%
f\left( x\right) =x/\left( 1+x\right) $ is strictly increasing we derive
that $x_{n,k}$ is strictly increasing if and only if 
\begin{equation*}
\frac{\gamma _{n-1,k-1}}{\gamma _{n-1,k}}=\frac{w_{n-1,k-1}}{w_{n,k}}\frac{k%
}{n-k}\text{ is strictly increasing.}
\end{equation*}
This is exactly condition (W). The construction of the rational Bernstein
operator $R_{n}$ has the disadvantage that one has to check the condition
(W) for the Bernstein coefficients of the polynomial $Q_{n-1}$ which in
general might be cumbersome.

\begin{example}
Take $Q_{n-1}\left( x\right) =1+x^{2},$ then straightforward calculations
show that 
\begin{eqnarray*}
\gamma _{n-1,k} &=&\binom{n-1}{k}\left( 1+\frac{k\left( k-1\right) }{\left(
n-1\right) \left( n-2\right) }\right)  \\
\gamma _{n-1,k-1}+\gamma _{n-1,k} &=&\binom{n}{k}\frac{n\left( n-1\right)
+k\left( k-1\right) }{n\left( n-1\right) } \\
x_{n,k} &=&\frac{k}{n-2}\frac{\left( n-1\right) \left( n-2\right) +\left(
k-1\right) \left( k-2\right) }{n\left( n-1\right) +k\left( k-1\right) }
\end{eqnarray*}
and the rational Bernstein operator $R_{n}$ is given by 
\begin{equation*}
R_{n}f\left( x\right) =\sum_{k=1}^{n-1}\binom{n}{k}\frac{n\left( n-1\right)
+k\left( k-1\right) }{n\left( n-1\right) }f\left( x_{n,k}\right) \frac{%
x^{k}\left( 1-x\right) ^{n-k}}{1+x^{2}}.
\end{equation*}
\end{example}

We now change our point of view: instead of starting with the polynomial $%
Q_{n-1}$ we just start with an increasing sequence 
\begin{equation*}
0=x_{n,0}<x_{n,1}<...<x_{n,n-1}<x_{n,n}=1.
\end{equation*}
We use equation (\ref{eqxnk}) to define $\frac{\gamma _{n-1,k-1}}{\gamma
_{n-1,k}}.$ Clearly (\ref{eqxnk}) is equivalent to 
\begin{equation*}
x_{n,k}\left( 1+\frac{\gamma _{n-1,k-1}}{\gamma _{n-1,k}}\right) =\frac{%
\gamma _{n-1,k-1}}{\gamma _{n-1,k}}\text{ and }\frac{\gamma _{n-1,k-1}}{%
\gamma _{n-1,k}}=\frac{x_{n,k}}{1-x_{n,k}}
\end{equation*}
which is a recursion formula for $\gamma _{n-1,k}$ provided we have defined $%
\gamma _{n-1,0}.$ Hence define 
\begin{equation}
\gamma _{n-1,k}:=\gamma _{n-1,0}\prod\limits_{l=1}^{k}\frac{1-x_{n,l}}{%
x_{n,l}}.  \label{defgamm}
\end{equation}
These remarks lead to the following statement:

\begin{proposition}
Let $0=x_{n,0}<x_{n,1}<...<x_{n,n-1}<x_{n,n}=1$. Let $\gamma _{n-1,0}>0$ and
define $\gamma _{n-1,k}$ by (\ref{defgamm}) for $k=1,...,n$ and define 
\begin{equation*}
Q_{n-1}\left( x\right) =\sum_{k=0}^{n-1}\gamma _{n-1,k}x^{k}\left(
1-x\right) ^{n-1-k}.
\end{equation*}
Then $Q_{n-1}$ satisfies property (W) and the operator 
\begin{equation*}
R_{n}f\left( x\right) =\sum_{k=0}^{n}f\left( x_{n,k}\right) \left( \gamma
_{n-1,k}+\gamma _{n-1,k-1}\right) \frac{x^{k}\left( 1-x\right) ^{n-k}}{%
Q_{n-1}\left( x\right) }
\end{equation*}
is the rational Bernstein operator $R_{n}$ fixing $e_{0}$ and $e_{1}$.
\end{proposition}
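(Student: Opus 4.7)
The plan is to verify the two conclusions directly, exploiting the fact that the definition (\ref{defgamm}) is engineered precisely so that
\[
\frac{\gamma_{n-1,k-1}}{\gamma_{n-1,k}} = \frac{x_{n,k}}{1-x_{n,k}} \qquad (k = 1, \dots, n-1),
\]
which is immediate upon dividing consecutive terms of the product (\ref{defgamm}).

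First I would check property (W). Since $0 < x_{n,l} < 1$ for $l = 1, \dots, n-1$, every factor $(1-x_{n,l})/x_{n,l}$ is positive, and combined with $\gamma_{n-1,0} > 0$ this forces $\gamma_{n-1,k} > 0$ for $k = 0, \dots, n-1$; hence the Bernstein coefficients $w_{n-1,k} = \gamma_{n-1,k}/\binom{n-1}{k}$ are strictly positive. For the monotonicity clause of (W), the text immediately preceding the proposition established that (W) is equivalent to $k \mapsto \gamma_{n-1,k-1}/\gamma_{n-1,k}$ being strictly increasing; by the displayed identity this ratio equals $x_{n,k}/(1-x_{n,k})$, and the composition of the strictly increasing map $t \mapsto t/(1-t)$ on $[0,1)$ with the strictly increasing sequence $(x_{n,k})$ is again strictly increasing.

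Next I would check that $R_n$ fixes $e_0$ and $e_1$. For $R_n e_0 = 1$, the telescoping identity $x^k(1-x)^{n-1-k} = x^k(1-x)^{n-k} + x^{k+1}(1-x)^{n-1-k}$ recorded at the start of Section~2 rewrites
\[
Q_{n-1}(x) = \sum_{k=0}^{n} (\gamma_{n-1,k} + \gamma_{n-1,k-1}) \, x^{k}(1-x)^{n-k},
\]
with the convention $\gamma_{n-1,-1} = 0$; the matching convention $\gamma_{n-1,n} = 0$ is automatic from (\ref{defgamm}) through the boundary factor $(1-x_{n,n})/x_{n,n} = 0$. Dividing by $Q_{n-1}$ inside the defining sum of $R_n$ yields $R_n e_0 = 1$. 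For $R_n e_1 = e_1$, the derivation around (\ref{eq1}) reduces the claim to verifying $x_{n,k} = \gamma_{n-1,k-1}/(\gamma_{n-1,k} + \gamma_{n-1,k-1})$ for $k = 1, \dots, n$; dividing numerator and denominator by $\gamma_{n-1,k}$ and applying the displayed identity transforms the right-hand side into $(x_{n,k}/(1-x_{n,k}))/(1 + x_{n,k}/(1-x_{n,k})) = x_{n,k}$, while at $k = n$ the quotient reduces directly to $1 = x_{n,n}$.

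I do not anticipate any substantial obstacle: formula (\ref{defgamm}) is essentially a reverse-engineering of (\ref{eqxnk}), so the verification is short and mechanical. The only point that demands care is the bookkeeping at the boundary indices $k = 0$ and $k = n$, where the conventions $\gamma_{n-1,-1} = 0$ and $\gamma_{n-1,n} = 0$ must be deployed consistently, the latter being automatically built into (\ref{defgamm}) through the hypothesis $x_{n,n} = 1$.
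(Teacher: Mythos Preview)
Your proposal is correct and follows essentially the same approach as the paper: both proofs hinge on the immediate consequence of (\ref{defgamm}) that $\gamma_{n-1,k-1}/\gamma_{n-1,k}=x_{n,k}/(1-x_{n,k})$, from which property (W) follows by monotonicity of $x_{n,k}$ and the reproduction of $e_0,e_1$ follows from the derivation preceding the proposition. The paper's proof is simply terser, leaving the verification of $R_ne_0=e_0$, $R_ne_1=e_1$ and the boundary bookkeeping implicit in its appeal to (\ref{eqxnk}), whereas you spell these out.
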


\begin{proof}
There is not much to show: by definition of $\gamma _{n-1,k}$ we see that $%
\frac{\gamma _{n-1,k-1}}{\gamma _{n-1,k}}=\frac{x_{n,k}}{1-x_{n,k}}$. This
clearly implies that (\ref{eqxnk}) holds, so the nodes of the operator $%
R_{n} $ are just the given numbers $x_{n,k}.$ Since $x_{n,k}$'s are
increasing we see that $\frac{\gamma _{n-1,k-1}}{\gamma _{n-1,k}}$ are
increasing and therefore property (W) holds.
\end{proof}

\section{Convergence of rational Bernstein operators}

The following result is of central importance:

\begin{theorem}
\label{ThmPowers}Let $R_{n}$ be the rational Bernstein operator for the
polynomial $Q_{n-1}\left( x\right) $ of degree $\leq n-1$ satisfying (i) and
(ii) in the introduction and let $x_{n,k}$ be defined by (\ref{eqxnk}). Then
the following identity 
\begin{equation}
R_{n}\left( e_{s}\right) \left( x\right) -x^{s}=\frac{x\left( 1-x\right) }{%
Q_{n-1}\left( x\right) }\sum_{l=0}^{s-2}x^{l}\sum_{k=0}^{n-1}\gamma
_{n-1,k}\left( x_{n,k+1}^{s-1-l}-x_{n,k}^{s-1-l}\right) x^{k}\left(
1-x\right) ^{n-1-k}  \label{eqneu}
\end{equation}
holds for the polynomial $e_{s}\left( x\right) =x^{s}$ and $s\geq 1.$
\end{theorem}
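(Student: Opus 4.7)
The plan is to first derive a single-sum form for $Q_{n-1}(x) R_n(e_s)(x)$ that already exhibits the factor $x$, and then to extract a further factor of $(1-x)$ using the defining relation (\ref{eqxnk}) of the nodes.

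The key observation is that, by (\ref{eqxnk}), $x_{n,k} \overline{w}_{n,k} \binom{n}{k} = \gamma_{n-1,k-1}$, so
\[
Q_{n-1}(x) R_n(e_s)(x) = \sum_{k=0}^n x_{n,k}^s \overline{w}_{n,k}\binom{n}{k} x^k(1-x)^{n-k} = \sum_{k=0}^n x_{n,k}^{s-1} \gamma_{n-1,k-1} x^k(1-x)^{n-k}.
\]
Using $\gamma_{n-1,-1} = 0$ and re-indexing $k \to k+1$ yields the clean form
\[
Q_{n-1}(x) R_n(e_s)(x) = x \sum_{k=0}^{n-1} x_{n,k+1}^{s-1} \gamma_{n-1,k} x^k(1-x)^{n-1-k}.
\]
Subtracting $x^s Q_{n-1}(x) = x \cdot x^{s-1} Q_{n-1}(x)$ and factoring $x_{n,k+1}^{s-1} - x^{s-1} = (x_{n,k+1} - x) \sum_{l=0}^{s-2} x_{n,k+1}^{s-2-l} x^l$, I obtain
\[
Q_{n-1}(x)[R_n(e_s)(x) - x^s] = x \sum_{l=0}^{s-2} x^l \sum_{k=0}^{n-1} \gamma_{n-1,k} (x_{n,k+1} - x) x_{n,k+1}^{s-2-l} x^k(1-x)^{n-1-k}.
\]

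At this point, the theorem reduces to the following claim, to be applied with $m = s-1-l$ for each $l \in \{0,\dots,s-2\}$: for every integer $m \geq 1$,
\[
\sum_{k=0}^{n-1} \gamma_{n-1,k} (x_{n,k+1}-x) x_{n,k+1}^{m-1} x^k(1-x)^{n-1-k} = (1-x) \sum_{k=0}^{n-1} \gamma_{n-1,k} (x_{n,k+1}^m - x_{n,k}^m) x^k(1-x)^{n-1-k}.
\]
I would prove this by forming the difference of the two sides; the summand then collapses to $x \cdot x_{n,k+1}^{m-1}(x_{n,k+1}-1) + (1-x) x_{n,k}^m$. Re-indexing the first piece via $j = k+1$, and using the boundary vanishings $x_{n,0} = 0$ and $x_{n,n} = 1$ to eliminate endpoint terms, the two sums combine over $1 \leq k \leq n-1$ into
\[
\sum_{k=1}^{n-1} x^k(1-x)^{n-k} x_{n,k}^{m-1} \bigl[(\gamma_{n-1,k-1}+\gamma_{n-1,k}) x_{n,k} - \gamma_{n-1,k-1}\bigr],
\]
which vanishes identically by (\ref{eqxnk}).

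The conceptual point, and the main obstacle, is that the factorization used to expose $(x_{n,k+1}-x)$ introduces the wrong difference; the consecutive-node difference $x_{n,k+1}^m - x_{n,k}^m$ demanded by (\ref{eqneu}) only materializes after summation against the weights $\gamma_{n-1,k} x^k(1-x)^{n-1-k}$, via the very relation (\ref{eqxnk}) that encodes $R_n e_1 = e_1$. Once this reshuffling is recognized, the remainder of the proof is routine index and boundary bookkeeping.
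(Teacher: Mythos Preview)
Your proof is correct. Both your argument and the paper's rest on the same core identity---your ``key claim'' is, after multiplying through by $x$, exactly the recursion step the paper establishes---but the two proofs are organized differently. The paper derives the recursion
\[
Q_{n-1}(x)\,R_n(e_s)(x) = x(1-x)\sum_{k=0}^{n-1}\gamma_{n-1,k}\bigl(x_{n,k+1}^{s-1}-x_{n,k}^{s-1}\bigr)x^k(1-x)^{n-1-k} + x\,Q_{n-1}(x)\,R_n(e_{s-1})(x)
\]
(via degree elevation of the Bernstein basis and the relation $\gamma_{n-1,k}=\gamma_{n-1,k-1}(1-x_{n,k})/x_{n,k}$), and then iterates down to $R_n(e_1)=e_1$. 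You instead subtract $x^s Q_{n-1}(x)$ at the outset, factor $x_{n,k+1}^{s-1}-x^{s-1}$ as a geometric sum, and apply your key identity once for each $l$; your verification of that identity via re-indexing and the boundary values $x_{n,0}=0$, $x_{n,n}=1$ is clean and avoids the degree-elevation manipulation. The net effect is the same: the paper's induction on $s$ is replaced by your explicit sum over $l$, and both routes ultimately invoke (\ref{eqxnk}) to make the unwanted terms vanish.
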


\begin{proof}
At first we note that (\ref{eqxnk}) implies that  
\begin{equation*}
\gamma _{n-1,k}+\gamma _{n-1,k-1}=\gamma _{n-1,k-1}\frac{1-x_{n,k}}{x_{n,k}}%
+\gamma _{n-1,k-1}=\gamma _{n-1,k-1}\frac{1}{x_{n,k}}.
\end{equation*}
It follows that 
\begin{equation*}
R_{n}f=f\left( 0\right) \gamma _{n-1,0}\frac{\left( 1-x\right) ^{n}}{%
Q_{n-1}\left( x\right) }+\sum_{k=1}^{n}f\left( x_{n,k}\right) \frac{\gamma
_{n-1,k-1}}{x_{n,k}}\frac{x^{k}\left( 1-x\right) ^{n-k}}{Q_{n-1}\left(
x\right) }.
\end{equation*}
Let $s\geq 1$ and $e_{s}\left( x\right) =x^{s}.$ Since $x_{n,0}=0$ and $%
e_{s}\left( x_{n,0}\right) =x_{n,0}^{s}=0$ we have 
\begin{equation}
Q_{n-1}\left( x\right) R_{n}\left( e_{s}\right) \left( x\right)
=\sum_{k=1}^{n}\gamma _{n-1,k-1}x_{n,k}^{s-1}\cdot x^{k}\left( 1-x\right)
^{n-k}.  \label{eqidqn1}
\end{equation}
Using an index transformation we arrive at 
\begin{equation}
Q_{n-1}\left( x\right) R_{n}\left( e_{s}\right) \left( x\right)
=x\sum_{k=0}^{n-1}\gamma _{n-1,k}x_{n,k+1}^{s-1}\cdot x^{k}\left( 1-x\right)
^{n-1-k}.  \label{eqNice}
\end{equation}
Writing $x^{k}\left( 1-x\right) ^{n-1-k}=x^{k}\left( 1-x\right)
^{n-k}+x^{k+1}\left( 1-x\right) ^{n-1-k}$ we obtain 
\begin{eqnarray*}
Q_{n-1}\left( x\right) R_{n}\left( e_{s}\right) \left( x\right) 
&=&x\sum_{k=0}^{n-1}\gamma _{n-1,k}x_{n,k+1}^{s-1}\cdot x^{k}\left(
1-x\right) ^{n-k} \\
&&+x\sum_{k=0}^{n-1}\gamma _{n-1,k}x_{n,k+1}^{s-1}\cdot x^{k+1}\left(
1-x\right) ^{n-1-k}.
\end{eqnarray*}
The second sum is equal to $x\sum_{k=1}^{n}\gamma
_{n-1,k-1}x_{n,k}^{s-1}\cdot x^{k}\left( 1-x\right) ^{n-k}.$ Using the
convention $\gamma _{n-1,n}=\gamma _{n-1,-1}=0$ and the fact that $x_{n,0}=0$
we obtain 
\begin{equation*}
Q_{n-1}\left( x\right) R_{n}\left( e_{s}\right) \left( x\right)
=x\sum_{k=0}^{n}\left( \gamma _{n-1,k}x_{n,k+1}^{s-1}+\gamma
_{n-1,k-1}x_{n,k}^{s-1}\right) \cdot x^{k}\left( 1-x\right) ^{n-k}.
\end{equation*}

On the other hand, we have the trivial identity 
\begin{equation*}
\gamma _{n-1,k}\left( x_{n,k+1}^{s-1}-x_{n,k}^{s-1}\right) =\gamma
_{n-1,k}\left( x_{n,k+1}^{s-1}+\frac{x_{n,k}}{1-x_{n,k}}x_{n,k}^{s-2}\left(
x_{n,k}-1\right) \right) 
\end{equation*}
and using $\gamma _{n-1,k}=\gamma _{n-1,k-1}\frac{1-x_{n,k}}{x_{n,k}}$    
\begin{eqnarray*}
\gamma _{n-1,k}\left( x_{n,k+1}^{s-1}-x_{n,k}^{s-1}\right)  &=&\gamma
_{n-1,k}\left( x_{n,k+1}^{s-1}+\frac{\gamma _{n-1,k-1}}{\gamma _{n-1,k}}%
x_{n,k}^{s-2}\left( x_{n,k}-1\right) \right)  \\
&=&\gamma _{n-1,k}x_{n,k+1}^{s-1}+\gamma _{n-1,k-1}x_{n,k}^{s-1}-\gamma
_{n-1,k-1}x_{n,k}^{s-2}.
\end{eqnarray*}
It follows that 
\begin{eqnarray*}
Q_{n-1}\left( x\right) R_{n}\left( e_{s}\right) \left( x\right) 
&=&x\sum_{k=0}^{n}\gamma _{n-1,k}\left( x_{n,k+1}^{s-1}-x_{n,k}^{s-1}\right)
\cdot x^{k}\left( 1-x\right) ^{n-k} \\
&&+x\sum_{k=0}^{n}\gamma _{n-1,k-1}x_{n,k}^{s-2}\cdot x^{k}\left( 1-x\right)
^{n-k}.
\end{eqnarray*}
As $\gamma _{n-1,n}=0,$ the indices in the first sum range only up to $n-1$.
The first summand of the second sum is zero. Using (\ref{eqidqn1}) for $s-1$
instead of $s$ for the second sum we arrive 
\begin{eqnarray*}
Q_{n-1}\left( x\right) R_{n}\left( e_{s}\right) \left( x\right)  &=&x\left(
1-x\right) \sum_{k=0}^{n-1}\gamma _{n-1,k}\left(
x_{n,k+1}^{s-1}-x_{n,k}^{s-1}\right) \cdot x^{k}\left( 1-x\right) ^{n-1-k} \\
&&+x\cdot Q_{n-1}\left( x\right) R_{n}\left( e_{s-1}\right) \left( x\right) .
\end{eqnarray*}
Now use this formula inductively and recall that $R_{n}\left( e_{1}\right)
=e_{1}$ leading to the statement in theorem.
\end{proof}

\begin{corollary}
\label{CorEstx2}The rational Bernstein operators $R_{n}$ satisfy the
inequality 
\begin{equation*}
\left\vert R_{n}\left( e_{2}\right) \left( x\right) -x^{2}\right\vert \leq
\sup_{0\leq k\leq n-1}\left\vert x_{n,k+1}-x_{n,k}\right\vert \cdot x\left(
1-x\right) .
\end{equation*}
\end{corollary}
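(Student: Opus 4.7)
The plan is to specialize Theorem \ref{ThmPowers} to the case $s=2$ and then extract the supremum from the resulting sum, using positivity throughout.

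First I would apply the identity (\ref{eqneu}) with $s=2$. Since the outer sum $\sum_{l=0}^{s-2}$ then reduces to the single term $l=0$ with $x^{l}=1$ and exponent $s-1-l=1$, it collapses to
\begin{equation*}
R_{n}(e_{2})(x)-x^{2}=\frac{x(1-x)}{Q_{n-1}(x)}\sum_{k=0}^{n-1}\gamma _{n-1,k}\bigl(x_{n,k+1}-x_{n,k}\bigr)x^{k}(1-x)^{n-1-k}.
\end{equation*}

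Next I would observe that every ingredient in the sum is nonnegative for $x\in [0,1]$: the nodes are increasing so $x_{n,k+1}-x_{n,k}\geq 0$, the weights $\gamma _{n-1,k}$ are strictly positive by hypothesis (W), and the Bernstein-type monomials $x^{k}(1-x)^{n-1-k}$ are nonnegative. This lets me pull $\sup_{k}|x_{n,k+1}-x_{n,k}|=\Delta _{n}$ out of the sum without loss. What remains is $\sum_{k=0}^{n-1}\gamma _{n-1,k}x^{k}(1-x)^{n-1-k}$, which is precisely $Q_{n-1}(x)$ by its representation (\ref{eqQrep}).

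The $Q_{n-1}(x)$ then cancels against the denominator, leaving exactly
\begin{equation*}
\bigl|R_{n}(e_{2})(x)-x^{2}\bigr|\leq \Delta _{n}\cdot x(1-x),
\end{equation*}
as desired. There is no real obstacle here once Theorem \ref{ThmPowers} is in hand; the only thing worth flagging is the clean cancellation with $Q_{n-1}(x)$, which is precisely what makes the case $s=2$ so much sharper than the higher-order cases (where the inner sum is weighted by differences of higher powers $x_{n,k+1}^{s-1-l}-x_{n,k}^{s-1-l}$ and does not telescope so immediately against $Q_{n-1}$).
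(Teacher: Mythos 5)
Your argument is correct and is exactly the paper's proof: specialize Theorem \ref{ThmPowers} to $s=2$ to get (\ref{eqRQ}), then use positivity of the $\gamma_{n-1,k}$ and the increasing nodes to bound the sum by $\Delta_n Q_{n-1}(x)$, which cancels the denominator. Your write-up just makes explicit the cancellation step that the paper calls obvious.
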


\begin{proof}
From Theorem \ref{ThmPowers} for $s=2$ we see that 
\begin{equation}
R_{n}\left( e_{2}\right) \left( x\right) -x^{2}=\frac{x\left( 1-x\right) }{%
Q_{n-1}\left( x\right) }\sum_{k=0}^{n-1}\gamma _{n-1,k}\left(
x_{n,k+1}-x_{n,k}\right) x^{k}\left( 1-x\right) ^{n-1-k}  \label{eqRQ}
\end{equation}
and then the statement is obvious since $\gamma _{n-1,k}$ is positive. 
\end{proof}

\begin{corollary}
The rational Bernstein operators $R_{n}$ converges to the identity operator
if and only if 
\begin{equation}
\Delta _{n}:=\sup_{0\leq k\leq n-1}\left\vert x_{n,k+1}-x_{n,k}\right\vert
\label{eqDeltan2}
\end{equation}
converges to $0.$
\end{corollary}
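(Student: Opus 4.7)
The plan splits naturally along the two implications. For the forward direction ($\Delta_n \to 0 \Rightarrow R_n \to \mathrm{id}$), I would invoke the Bohman--Korovkin theorem. Since each $R_n$ is a positive linear operator on $C[0,1]$ that already fixes $e_0$ and $e_1$ by construction, it suffices to verify $R_n e_2 \to e_2$ uniformly on $[0,1]$. Corollary \ref{CorEstx2} supplies this immediately: using $\sup_{x\in[0,1]} x(1-x) = 1/4$, we get
\[
\sup_{x\in[0,1]} \bigl|R_n(e_2)(x) - x^2\bigr| \;\leq\; \frac{\Delta_n}{4},
\]
which tends to zero by hypothesis. Korovkin then yields $R_n f \to f$ uniformly for every $f \in C[0,1]$.

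For the reverse direction, I would argue by contrapositive. Assume $\Delta_n \not\to 0$; then there exist $\varepsilon>0$, a subsequence $(n_j)$, and indices $k_j$ with $x_{n_j,k_j+1}-x_{n_j,k_j}\geq \varepsilon$. By Bolzano--Weierstrass, after passing to a further subsequence we may assume $x_{n_j,k_j}\to a$ and $x_{n_j,k_j+1}\to b$ with $b-a\geq \varepsilon$. Because $k\mapsto x_{n_j,k}$ is strictly increasing, for $j$ large every node $x_{n_j,k}$ lies either in $[0,a+\varepsilon/8]$ (for $k\leq k_j$) or in $[b-\varepsilon/8,1]$ (for $k\geq k_j+1$), and in particular no node falls into the middle interval $I := [a+\varepsilon/4,\, b-\varepsilon/4]$. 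Pick a continuous bump $f\in C[0,1]$ with $\mathrm{supp}\,f\subset I$ and $f((a+b)/2)=1$. Then $f(x_{n_j,k})=0$ for every $k$, so $R_{n_j}f\equiv 0$ on $[0,1]$, contradicting the assumed convergence $R_{n_j}f((a+b)/2)\to f((a+b)/2)=1$.

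The main technical issue is the node-exclusion step in the second paragraph: one must confirm that the monotonicity of the nodes in $k$, together with the convergence of the two chosen nodes to $a$ and $b$, is enough to push \emph{every} node out of the fixed central subinterval $I$ for $j$ large. This is a short $\delta$-argument that uses nothing about the weights $\gamma_{n-1,k}$ or the polynomial $Q_{n-1}$ beyond the defining fact that $R_n$ samples $f$ only at the nodes $x_{n,k}$; indeed, the explicit formula for $R_n$ makes $R_{n_j}f$ vanish identically once $f$ vanishes at every node. Aside from this, both implications rely only on results already available in the excerpt (Corollary \ref{CorEstx2}) and the classical Korovkin theorem, so no serious obstacle remains.
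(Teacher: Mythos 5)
Your proposal is correct and follows essentially the same route as the paper: Korovkin plus Corollary \ref{CorEstx2} for the direction $\Delta_n\to 0 \Rightarrow R_n\to \mathrm{id}$, and for the converse a subsequence/Bolzano--Weierstrass argument producing a gap between consecutive nodes, node exclusion from a central subinterval via monotonicity, and a bump function supported there on which $R_{n_j}f\equiv 0$. The node-exclusion step you flag is exactly the short $\delta$-argument the paper carries out, so there is nothing further to fix.
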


\begin{proof}
If $\Delta _{n}$ converges to zero it follows that $R_{n}e_{2}$ converges
uniformly to $e_{2}$ and Korovkin's theorem shows that $R_{n}$ converges to
the identity operator. Conversely, suppose that $R_{n}$ converges to the
identity operator and suppose that $\Delta _{n}$ does not converge to $0.$
Then there exists $\delta >0$ and a subsequence $\left( n_{l}\right) _{l}$
such that $\Delta _{n_{l}}\geq 2\delta .$ Hence for each $l$ there $%
k_{n,l}\in \left\{ 0,...,n_{l}-1\right\} $ such that 
\begin{equation}
\left\vert x_{n_{l},k_{l}+1}-x_{n_{l},k_{l}}\right\vert \geq \delta .
\label{eqdel}
\end{equation}
Since $x_{n,k}\in \left[ 0,1\right] $ we can pass to a subsequence of $%
x_{n_{l},k_{l}}$ which converges to some point $x_{0}$ and we can pass again
to a subsequence of the subsequence such that $x_{n_{l_{r}},k_{l_{r}}}$
converges to $x_{0}$ and $x_{n_{l_{r}},k_{l_{r}}+1}$ converges to $x_{1}.$
From (\ref{eqdel}) it follows that $\left\vert x_{1}-x_{0}\right\vert \geq
\delta ,$ and since $x_{n_{l},k_{l}}\leq x_{n_{l},k_{l}+1}$ we infer that $%
x_{0}\leq x_{1}.$ Now we take a natural number $r_{0}$ such that $\left\vert
x_{0}-x_{n_{l_{r}},k_{l_{r}}}\right\vert <\delta /3$ and $\left\vert
x_{1}-x_{n_{l_{r}},k_{l_{r}}+1}\right\vert <\delta /3$ for all $r\geq r_{0}.$
From the monotonicity of $x_{n,k}$ for $k=0,...,n_{l}-1$ it follows that $%
x_{n_{l_{r}},k}\notin \left[ x_{0}+\delta /3,x_{1}-\delta /3\right] $ for
all $k=0,...,n_{l_{r}}$ and $l\geq l_{0}.$ Now construct a continuous
non-zero function $f$ with support in $\left[ x_{0}+\delta /3,x_{1}-\delta /3%
\right] $ such that $f\left( \xi \right) \neq 0$ for some $\xi \in \left[
x_{0}+\delta /3,x_{1}-\delta /3\right] .$ Then $B_{n_{l}}f\left( x\right) =0$
for all $x\in \left[ 0,1\right] .$ By assumption, $B_{n\,l_{r}}f\left( \xi
\right) $ converges to $f\left( \xi \right) \neq 0.$ Since $%
B_{n\,l_{r}}f\left( \xi \right) =0$ we obtain a contradiction completing the
proof.
\end{proof}

\begin{corollary}
The following inequality holds for all $x\in \left[ 0,1\right] $ and for all
natural numbers $s\geq 2:$ 
\begin{equation*}
0\leq x^{s}<R_{n}\left( e_{s}\right) \left( x\right) .
\end{equation*}
\end{corollary}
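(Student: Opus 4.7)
The plan is to apply Theorem \ref{ThmPowers} directly and then perform a sign inspection of the right-hand side of formula (\ref{eqneu}). Since $x \in [0,1]$, the bound $0 \leq x^{s}$ is immediate, so all the work lies in deducing $x^{s} \leq R_{n}(e_{s})(x)$ (with strict inequality on the open interval $(0,1)$).

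I would first note that the prefactor $x(1-x)/Q_{n-1}(x)$ is non-negative on $[0,1]$ and strictly positive on $(0,1)$, since $Q_{n-1}$ is assumed strictly positive on $[0,1]$. Next, I would argue that every summand in the double sum
$$\sum_{l=0}^{s-2}x^{l}\sum_{k=0}^{n-1}\gamma_{n-1,k}\bigl(x_{n,k+1}^{s-1-l}-x_{n,k}^{s-1-l}\bigr)x^{k}(1-x)^{n-1-k}$$
is non-negative: the coefficients $\gamma_{n-1,k}$ are positive by construction, the factors $x^{l}$ and $x^{k}(1-x)^{n-1-k}$ are non-negative on $[0,1]$, and the nodes satisfy $0 \leq x_{n,k} < x_{n,k+1} \leq 1$. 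The crucial observation is that the exponent $s-1-l$ runs through the values $\{1,2,\dots,s-1\}$ as $l$ ranges from $0$ to $s-2$, all strictly positive integers; this is where the hypothesis $s \geq 2$ enters, and it is what makes the strict monotonicity of the nodes translate into $x_{n,k+1}^{s-1-l}-x_{n,k}^{s-1-l} > 0$ (rather than a possibly vanishing zero-th power difference). Summing gives $R_{n}(e_{s})(x) \geq x^{s}$.

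To upgrade to strict inequality on $(0,1)$, I would isolate one term that is provably positive. Taking $k=0$ and $l=s-2$ produces
$$x^{s-2}\cdot \gamma_{n-1,0}\bigl(x_{n,1}-x_{n,0}\bigr)(1-x)^{n-1} = \gamma_{n-1,0}\,x_{n,1}\,x^{s-2}(1-x)^{n-1},$$
which after multiplication by the prefactor $x(1-x)/Q_{n-1}(x)$ is strictly positive for $x \in (0,1)$. Hence $R_{n}(e_{s})(x) > x^{s}$ on the open interval, as claimed (the statement becomes an equality at the endpoints $x=0$ and $x=1$, where both sides equal $0$ and $1$ respectively, since $x_{n,0}=0$ and $x_{n,n}=1$).

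There is no real obstacle here; the argument is a direct corollary of Theorem \ref{ThmPowers}. The only subtlety worth checking is that the index range $0 \leq l \leq s-2$ never allows $s-1-l = 0$, so one never encounters a trivially vanishing difference $x_{n,k+1}^{0}-x_{n,k}^{0} = 0$ that would weaken the conclusion.
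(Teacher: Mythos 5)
Your argument is correct and follows essentially the same route as the paper: the paper's proof simply observes that the right-hand side of (\ref{eqneu}) is positive for $s\geq 2$, which is exactly your sign inspection of Theorem \ref{ThmPowers} (the paper also offers an alternative via convexity of $e_{s}$). You are in fact slightly more careful than the paper, which claims strict positivity on all of $\left[ 0,1\right] $: since the factor $x\left( 1-x\right) $ vanishes at the endpoints one has $R_{n}\left( e_{s}\right) \left( 0\right) =0$ and $R_{n}\left( e_{s}\right) \left( 1\right) =1$, so the strict inequality really holds only on $\left( 0,1\right) $, exactly as you note.
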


\begin{proof}
The right hand side in (\ref{eqneu}) is strictly positive for $x\in \left[
0,1\right] $ and $s\geq 2.$ Alternatively, one may argue that the function $%
e_{s}$ is convex, and the result follows from the remarks in \cite[p. 46]
{PiSa09}.
\end{proof}

In the rest of this section we shall prove some inequalities which will be
needed in Section 5:

\begin{proposition}
\label{Prop3}The following inequality holds 
\begin{equation*}
0\leq R_{n}\left( e_{3}\right) \left( x\right) -x^{3}\leq 3\cdot \left(
R_{n}\left( e_{2}\right) \left( x\right) -x^{2}\right) .
\end{equation*}
\end{proposition}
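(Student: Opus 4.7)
The plan is to apply Theorem \ref{ThmPowers} with $s=3$ and compare the resulting expression term-by-term with the formula (\ref{eqRQ}) for $R_{n}(e_{2})(x)-x^{2}$. Writing out the $s=3$ case gives
\begin{equation*}
R_{n}(e_{3})(x)-x^{3}=\frac{x(1-x)}{Q_{n-1}(x)}\sum_{k=0}^{n-1}\gamma_{n-1,k}\bigl[(x_{n,k+1}^{2}-x_{n,k}^{2})+x(x_{n,k+1}-x_{n,k})\bigr]x^{k}(1-x)^{n-1-k},
\end{equation*}
since the outer sum over $l$ runs from $0$ to $1$.

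Next I would factor $x_{n,k+1}^{2}-x_{n,k}^{2}=(x_{n,k+1}-x_{n,k})(x_{n,k+1}+x_{n,k})$. Because every node lies in $[0,1]$, the factor $x_{n,k+1}+x_{n,k}$ is at most $2$, and the multiplier $x$ in the second term is at most $1$. Hence the bracketed quantity is pointwise bounded by $3(x_{n,k+1}-x_{n,k})$. Substituting this and recognising the resulting expression as $3$ times the right-hand side of (\ref{eqRQ}) yields the desired upper bound $R_{n}(e_{3})(x)-x^{3}\leq 3(R_{n}(e_{2})(x)-x^{2})$.

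For the lower bound $0\leq R_{n}(e_{3})(x)-x^{3}$, the cleanest route is to invoke the previous corollary, which already gives $x^{s}\leq R_{n}(e_{s})(x)$ for all $s\geq 2$; alternatively the positivity is visible directly from the formula above, since $\gamma_{n-1,k}>0$, the nodes are increasing, and all the $x^{k}(1-x)^{n-1-k}$ factors are non-negative on $[0,1]$.

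I do not anticipate a serious obstacle here: the only slightly delicate point is making sure the outer sum index $l\in\{0,1\}$ is read off correctly from (\ref{eqneu}), and that the bound $x_{n,k+1}+x_{n,k}+x\leq 3$ is used uniformly on $[0,1]$ so that the constant $3$ can be pulled out of the sum while leaving precisely the quantity (\ref{eqRQ}) behind.
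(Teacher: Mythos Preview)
Your proposal is correct and essentially identical to the paper's own proof: both apply Theorem~\ref{ThmPowers} with $s=3$, factor the bracketed term as $(x_{n,k+1}-x_{n,k})(x_{n,k+1}+x_{n,k}+x)$, bound the second factor by $3$ on $[0,1]$, and recognise the result as $3$ times the expression (\ref{eqRQ}) for $R_n(e_2)(x)-x^2$. The lower bound is handled the same way, either by the preceding corollary or by direct inspection of the positivity of every term.
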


\begin{proof}
From Theorem \ref{ThmPowers} for $s=3$ we see that 
\begin{equation*}
R_{n}\left( e_{3}\right) \left( x\right) -x^{3}=\frac{x\left( 1-x\right) }{%
Q_{n-1}\left( x\right) }\sum_{k=0}^{n-1}\gamma _{n-1,k}A_{k}x^{k}\left(
1-x\right) ^{n-1-k}
\end{equation*}
where 
\begin{equation*}
A_{k}=x_{n,k+1}^{2}-x_{n,k}^{2}+x\left( x_{n,k+1}-x_{n,k}\right) =\left(
x_{n,k+1}-x_{n,k}\right) \left( x_{n,k+1}+x_{n,k}+x\right) \geq 0.
\end{equation*}
Since $0\leq x_{n,k+1}+x_{n,k}+x\leq 3$ we obtain 
\begin{equation*}
0\leq R_{n}\left( e_{3}\right) \left( x\right) -x^{3}\leq 3\frac{x\left(
1-x\right) }{Q_{n-1}\left( x\right) }\sum_{k=0}^{n-1}\gamma _{n-1,k}\left(
x_{n,k+1}-x_{n,k}\right) x^{k}\left( 1-x\right) ^{n-1-k}
\end{equation*}
and the last expression is equal to $3\left( R_{n}\left( e_{2}\right) \left(
x\right) -x^{2}\right) .$ The proof is complete.
\end{proof}

\begin{proposition}
\label{PropRM}Let $r$ be a natural number. Then the expression 
\begin{equation*}
A:=\frac{x}{Q_{n-1}\left( x\right) }\sum_{k=0}^{n-1}\left(
x-x_{n,k+1}\right) ^{r}\gamma _{n-1,k}x^{k}\left( 1-x\right) ^{n-1-k}
\end{equation*}
is equal to 
\begin{equation*}
B:=\sum_{l=0}^{r}\binom{r}{l}x^{r-l}\left( -1\right) ^{l}\left[ R_{n}\left(
e_{l+1}\right) \left( x\right) -x^{l+1}\right] .
\end{equation*}
\end{proposition}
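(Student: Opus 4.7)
The plan is to start with a straightforward binomial expansion and then recognize the resulting inner sums as $R_{n}(e_{l+1})$ via equation (\ref{eqNice}) from the proof of Theorem~\ref{ThmPowers}.

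First I would expand
\begin{equation*}
(x-x_{n,k+1})^{r}=\sum_{l=0}^{r}\binom{r}{l}x^{r-l}(-1)^{l}x_{n,k+1}^{l}
\end{equation*}
and substitute this into $A$. Interchanging the finite sums over $k$ and $l$ gives
\begin{equation*}
A=\sum_{l=0}^{r}\binom{r}{l}x^{r-l}(-1)^{l}\cdot\frac{x}{Q_{n-1}(x)}\sum_{k=0}^{n-1}\gamma_{n-1,k}x_{n,k+1}^{l}x^{k}(1-x)^{n-1-k}.
\end{equation*}
By equation (\ref{eqNice}), applied with $s=l+1$, the inner sum is precisely $R_{n}(e_{l+1})(x)$, so
\begin{equation*}
A=\sum_{l=0}^{r}\binom{r}{l}x^{r-l}(-1)^{l}R_{n}(e_{l+1})(x).
\end{equation*}

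Next I would compare this with $B$. The difference $A-B$ equals
\begin{equation*}
\sum_{l=0}^{r}\binom{r}{l}x^{r-l}(-1)^{l}x^{l+1}=x\sum_{l=0}^{r}\binom{r}{l}x^{r-l}(-x)^{l}=x\,(x-x)^{r}=0,
\end{equation*}
using the binomial theorem in reverse and the assumption $r\geq 1$. This yields $A=B$, completing the proof.

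There is no real obstacle here: the argument is a routine application of the binomial theorem together with the closed-form expression (\ref{eqNice}) already established in Theorem~\ref{ThmPowers}. The only point requiring a moment of care is that the cancellation of the $x^{l+1}$ terms in $B$ depends on $r\geq 1$, which is guaranteed by the hypothesis that $r$ is a natural number.
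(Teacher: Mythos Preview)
Your proof is correct and follows essentially the same route as the paper: binomial expansion of $(x-x_{n,k+1})^{r}$, identification of the inner sum with $R_{n}(e_{l+1})$ via (\ref{eqNice}), and then the observation that $\sum_{l=0}^{r}\binom{r}{l}x^{r-l}(-1)^{l}x^{l+1}=x(x-x)^{r}=0$. Your explicit remark that the last cancellation needs $r\geq 1$ is a nice touch the paper leaves implicit.
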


\begin{proof}
Since $\left( x-x_{n,k+1}\right) ^{r}=\sum_{l=0}^{r}\binom{r}{l}%
x^{r-l}\left( -1\right) ^{l}x_{n,k+1}^{l}$ it is easy to see that 
\begin{equation*}
A=\sum_{l=0}^{r}\binom{r}{l}x^{r-l}\left( -1\right) ^{l}\frac{x}{%
Q_{n-1}\left( x\right) }\sum_{k=0}^{n-1}\gamma
_{n-1,k}x_{n,k+1}^{l}x^{k}\left( 1-x\right) ^{n-1-k}.
\end{equation*}
Using (\ref{eqNice}) we see that 
\begin{equation*}
A=\sum_{l=0}^{r}\binom{r}{l}x^{r-l}\left( -1\right) ^{l}R_{n}\left(
e_{l+1}\right)
\end{equation*}
and the result follows from the fact that 
\begin{equation*}
\sum_{l=0}^{r}\binom{r}{l}x^{r-l}\left( -1\right) ^{l}x^{l+1}=x\left(
x+\left( -x\right) \right) ^{r}=0.
\end{equation*}
\end{proof}

For the Bernstein operator $B_{n}$ it is well known that  
\begin{equation*}
B_{n}e_{2}\left( x\right) -x^{2}=\frac{x\left( 1-x\right) }{n}
\end{equation*}
is a polynomial of degree $\leq 2.$ For rational Bernstein operators the
expression $R_{n}e_{2}\left( x\right) -x^{2}$ is a never polynomial except
that $Q_{n-1}\left( x\right) =1,$ the case of the classical Bernstein
operator. Indeed, suppose that $R_{n}e_{2}\left( x\right) -x^{2}=p_{s}\left(
x\right) $ for some polynomial $p_{s}\left( x\right) $ of degree $s.$ Then
by (\ref{eqRQ}) 
\begin{equation*}
x\left( 1-x\right) \sum_{k=0}^{n-1}\gamma _{n-1,k}\left(
x_{n,k+1}-x_{n,k}\right) x^{k}\left( 1-x\right) ^{n-1-k}=p_{s}\left(
x\right) Q_{n-1}\left( x\right) 
\end{equation*}
which shows that $p_{s}\left( x\right) Q_{n-1}\left( x\right) $ has degree $%
\leq n+1.$ Hence $s\leq 2$ and clearly $x\left( 1-x\right) $ must be a
factor of $p_{s}\left( x\right) .$ Hence $p_{s}\left( x\right) =Ax\left(
1-x\right) .$ By uniqueness of the representation (\ref{eqQrep}) we infer
that $\gamma _{n-1,k}\left( x_{n,k+1}-x_{n,k}\right) =A\gamma _{n-1,k},$ so $%
x_{n,k+1}-x_{n,k}=A,$ and we arrive at the classical Bernstein operator.

\section{Error estimates for rational Bernstein operators}

Next we derive quantitative convergence results for $R_{n}.$ By estimates of
O. Shisha and B. Mond (we refer to Theorem 8.1 in \cite{PiSa09}) we conclude
that 
\begin{equation}
\left\vert R_{n}f\left( x\right) -f\left( x\right) \right\vert \leq \left( 1+%
\frac{1}{h}\sqrt{R_{n}\left( e_{1}-x\right) ^{2}\left( x\right) }\right)
\omega _{1}\left( f,h\right)  \label{eqSiMo}
\end{equation}
for all $f\in C\left[ 0,1\right] $ and $h>0$ where $\omega _{1}\left(
f,h\right) $ is the first modulus of continuity defined by 
\begin{equation*}
\omega _{1}\left( f,h\right) =\sup_{\left\vert x-y\right\vert \leq
h}\left\vert f\left( x\right) -f\left( y\right) \right\vert .
\end{equation*}
Since 
\begin{equation*}
R_{n}\left( e_{1}-x\right) ^{2}\left( x\right) =R_{n}\left( e_{2}\right)
\left( x\right) -2xR_{n}e_{1}\left( x\right) +x^{2}=R_{n}e_{2}\left(
x\right) -x^{2}
\end{equation*}
we obtain from (\ref{eqSiMo}) for $h:=\sqrt{\Delta _{n}}$, defined in (\ref
{eqDeltan2}), and from Corollary \ref{CorEstx2} the following result:

\begin{theorem}
The rational Bernstein operators $R_{n}$ satisfies the following inequality: 
\begin{equation}
\left\vert R_{n}f\left( x\right) -f\left( x\right) \right\vert \leq \left( 1+%
\sqrt{x\left( 1-x\right) }\right) \omega _{1}\left( f,\sqrt{\Delta _{n}}%
\right)  \label{eqQuant1}
\end{equation}
for all $f\in C\left[ 0,1\right] .$
\end{theorem}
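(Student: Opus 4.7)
The plan is to plug the pieces assembled just before the theorem statement directly into the Shisha--Mond inequality~(\ref{eqSiMo}). First I would exploit that $R_n$ fixes $e_0$ and $e_1$, so that the second moment simplifies to a central moment,
\begin{equation*}
R_n(e_1-x)^2(x) \;=\; R_n(e_2)(x) - 2x\,R_n(e_1)(x) + x^2\,R_n(e_0)(x) \;=\; R_n(e_2)(x) - x^2,
\end{equation*}
which is the algebraic identity already noted in the paragraph preceding the theorem.

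Next I would invoke Corollary~\ref{CorEstx2}, which provides the sharp bound
\begin{equation*}
0 \;\le\; R_n(e_2)(x) - x^2 \;\le\; \Delta_n\,x(1-x),
\end{equation*}
using the nonnegativity from $\gamma_{n-1,k}>0$ and $x_{n,k+1} \ge x_{n,k}$. Taking square roots yields $\sqrt{R_n(e_1-x)^2(x)} \le \sqrt{\Delta_n}\cdot\sqrt{x(1-x)}$.

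Finally, I would specialize the Shisha--Mond estimate~(\ref{eqSiMo}) at the particular choice $h := \sqrt{\Delta_n}$. With this choice, the factor $\frac{1}{h}\sqrt{R_n(e_1-x)^2(x)}$ collapses to $\sqrt{x(1-x)}$, and substituting into~(\ref{eqSiMo}) gives exactly inequality~(\ref{eqQuant1}). A small caveat to address is the edge case $\Delta_n = 0$: this can only occur if all nodes coincide, which is excluded since $0=x_{n,0}<\cdots<x_{n,n}=1$, so $\Delta_n > 0$ and the choice $h=\sqrt{\Delta_n}$ is legitimate.

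There is no real obstacle here: the theorem is essentially an assembly of Corollary~\ref{CorEstx2}, the fixed-point property~(\ref{eqfix}), and the quoted Shisha--Mond estimate. The only thing one must be careful about is the identification of the central second moment with $R_n(e_2)(x)-x^2$, which relies crucially on $R_n e_1 = e_1$ and $R_n e_0 = e_0$, and the correct calibration of $h$ to make the error estimate expressible in terms of $\Delta_n$ rather than an $x$-dependent quantity.
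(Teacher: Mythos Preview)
Your proposal is correct and follows exactly the paper's own argument: identify $R_n(e_1-x)^2(x)$ with $R_n(e_2)(x)-x^2$ via the fixed-point property, bound it by $\Delta_n\, x(1-x)$ using Corollary~\ref{CorEstx2}, and then specialize the Shisha--Mond estimate~(\ref{eqSiMo}) at $h=\sqrt{\Delta_n}$. Your additional remark on the edge case $\Delta_n=0$ is a harmless refinement the paper leaves implicit.
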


Similarly, Theorem 8.2 in \cite{PiSa09} provides us with the estimate 
\begin{equation*}
\left\vert R_{n}f\left( x\right) -f\left( x\right) \right\vert \leq \left( 1+%
\frac{1}{2h^{2}}R_{n}\left( e_{1}-x\right) ^{2}\left( x\right) \right)
\omega _{2}\left( f,h\right)
\end{equation*}
for all $f\in C\left[ 0,1\right] $ and $h>0$ where $\omega _{2}\left(
f,h\right) $ is the second modulus of continuity defined by 
\begin{equation*}
\omega _{2}\left( f,h\right) =\sup_{\left\vert \delta \right\vert \leq
h}\left\{ \left\vert f\left( x+\delta \right) -2f\left( x\right) +f\left(
x-\delta \right) \right\vert :x\pm h\in \left[ a,b\right] \right\} .
\end{equation*}
Taking $h=\sqrt{\Delta _{n}}$ we obtain

\begin{theorem}
The rational Bernstein operators $R_{n}$ satisfy the following inequality 
\begin{equation}
\left\vert R_{n}f\left( x\right) -f\left( x\right) \right\vert \leq \left( 1+%
\frac{1}{2}x\left( 1-x\right) \right) \omega _{2}\left( f,\sqrt{\Delta _{n}}%
\right)  \label{eqQuant2}
\end{equation}
for all $f\in C\left[ 0,1\right] .$
\end{theorem}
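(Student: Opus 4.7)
The plan is to deduce this theorem as a direct consequence of the Shisha--Mond estimate (Theorem~8.2 in \cite{PiSa09}) combined with the moment bound established in Corollary~\ref{CorEstx2}. The approach is essentially the same as the one used immediately before for inequality (\ref{eqQuant1}), with $\omega_2$ replacing $\omega_1$ and the second-moment bound entering \emph{linearly} rather than through a square root.

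First I would recall the Shisha--Mond style inequality stated just above the theorem, namely
\[
\left| R_{n}f(x) - f(x) \right| \leq \left( 1 + \frac{1}{2h^{2}} R_{n}(e_{1}-x)^{2}(x) \right) \omega_{2}(f, h),
\]
valid for every $f \in C[0,1]$ and $h > 0$. Next I would rewrite the central second moment using the fixed-point property $R_{n}e_{0} = e_{0}$ and $R_{n}e_{1} = e_{1}$: expanding $(e_{1} - x)^{2} = e_{2} - 2x \cdot e_{1} + x^{2} \cdot e_{0}$ and applying $R_{n}$ gives
\[
R_{n}(e_{1}-x)^{2}(x) = R_{n}e_{2}(x) - x^{2},
\]
exactly as in the derivation of (\ref{eqQuant1}).

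Then I would invoke Corollary~\ref{CorEstx2}, which yields
\[
R_{n}e_{2}(x) - x^{2} \leq \Delta_{n} \cdot x(1-x).
\]
Choosing $h = \sqrt{\Delta_{n}}$ (the same choice made for the $\omega_{1}$ estimate), the factor $\tfrac{1}{2h^{2}}$ becomes $\tfrac{1}{2\Delta_{n}}$, so the bracketed term collapses to
\[
1 + \frac{1}{2\Delta_{n}} \cdot \Delta_{n} \, x(1-x) = 1 + \tfrac{1}{2} x(1-x),
\]
which is exactly the prefactor in (\ref{eqQuant2}).

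There is really no obstacle to overcome here, since both the abstract positive-operator estimate and the key moment bound are already in hand; the only ``trick'' is the cancellation of $\Delta_{n}$ produced by the choice $h = \sqrt{\Delta_{n}}$, which is what makes the bound depend on $\Delta_{n}$ solely through the modulus of smoothness $\omega_{2}(f, \sqrt{\Delta_{n}})$. The proof is therefore a one-line substitution once Corollary~\ref{CorEstx2} and Theorem~8.2 of \cite{PiSa09} are combined.
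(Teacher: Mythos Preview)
Your proposal is correct and follows exactly the paper's approach: apply the Shisha--Mond type estimate from Theorem~8.2 in \cite{PiSa09}, use $R_{n}(e_{1}-x)^{2}(x)=R_{n}e_{2}(x)-x^{2}\leq \Delta_{n}\,x(1-x)$ from Corollary~\ref{CorEstx2}, and set $h=\sqrt{\Delta_{n}}$. There is nothing to add.
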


\section{Voronovskaja's Theorem}

The classical Voronovskaja theorem states the following:

\begin{theorem}
Let $f:\left[ 0,1\right] \rightarrow \mathbb{R}$ be bounded and
differentiable in a neighborhood of $x$ and has second derivative $f^{\prime
\prime }\left( x\right) .$ Then 
\begin{equation*}
\lim_{n\rightarrow \infty }n\cdot \left( R_{n}f\left( x\right) -f\left(
x\right) \right) =\frac{x\left( 1-x\right) }{2}f^{\prime \prime }\left(
x\right) .
\end{equation*}
\end{theorem}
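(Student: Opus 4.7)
The plan is to follow the classical Taylor-expansion route for Voronovskaja-type theorems, feeding in the explicit moment identities from Theorem \ref{ThmPowers} as the quantitative inputs. Since $f$ is bounded on $[0,1]$ and $f''(x)$ exists, I would begin with the pointwise Peano expansion
$$f(t) = f(x) + f'(x)(t-x) + \frac{1}{2} f''(x)(t-x)^2 + \eta(t)(t-x)^2,$$
where $\eta$ is bounded on $[0,1]$ and $\eta(t)\to 0$ as $t\to x$. Applying $R_{n}$ to both sides and using the fixing properties $R_{n}e_{0}=e_{0}$ and $R_{n}e_{1}=e_{1}$, the linear part telescopes and I obtain
$$R_{n}f(x) - f(x) = \frac{1}{2} f''(x)\bigl[R_{n}(e_{2})(x) - x^{2}\bigr] + R_{n}\bigl(\eta\cdot(e_{1}-x)^{2}\bigr)(x).$$
After multiplying by $n$, the theorem reduces to (a) $n\,[R_{n}(e_{2})(x) - x^{2}] \to x(1-x)$ and (b) $n\,R_{n}\bigl(\eta\cdot(e_{1}-x)^{2}\bigr)(x)\to 0$.

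For (a), I would plug in formula (\ref{eqRQ}) from Theorem \ref{ThmPowers},
$$R_{n}(e_{2})(x) - x^{2} = \frac{x(1-x)}{Q_{n-1}(x)}\sum_{k=0}^{n-1} \gamma_{n-1,k}(x_{n,k+1}-x_{n,k})\,x^{k}(1-x)^{n-1-k},$$
and then invoke the Mamedov-type asymptotic hypothesis on the nodes and weights referred to in the introduction to extract the exact limit $x(1-x)$. In essence, one must check that the weighted average of $n(x_{n,k+1}-x_{n,k})$ against the (normalized) Bernstein basis $\gamma_{n-1,k}x^{k}(1-x)^{n-1-k}/Q_{n-1}(x)$ tends to $1$; this is the step at which the detailed asymptotic structure of the operator enters the argument.

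For (b), I would use the standard Landau split. Given $\varepsilon>0$, pick $\delta>0$ such that $|\eta(t)|<\varepsilon$ for $|t-x|<\delta$. On the near set, positivity of $R_{n}$ and step (a) give a contribution bounded by $\varepsilon\,n\,R_{n}((e_{1}-x)^{2})(x) = O(\varepsilon)$. On the far set, the inequality $(t-x)^{2}\le(t-x)^{4}/\delta^{2}$ together with the $L^{\infty}$-bound on $\eta$ reduces matters to showing that $n\,R_{n}((e_{1}-x)^{4})(x)$ stays bounded. Proposition \ref{PropRM} with $r=4$ expresses this fourth central moment as a linear combination of the $R_{n}(e_{j})(x)-x^{j}$ for $j\le 5$; each of these can be controlled by Corollary \ref{CorEstx2} and iterated uses of Proposition \ref{Prop3} (and its natural higher-order analogues obtained from Theorem \ref{ThmPowers}), which give the needed $O(\Delta_{n})$ rate.

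The main obstacle I foresee is step (a). Corollary \ref{CorEstx2} delivers only the one-sided inequality $|R_{n}(e_{2})(x)-x^{2}| \le \Delta_{n}\,x(1-x)$, which is not sharp enough to pin down the limit. Promoting this to the exact asymptotic $n\,[R_{n}(e_{2})(x)-x^{2}] \to x(1-x)$ demands a genuine scale-$1/n$ asymptotic on the interaction between the nodes $x_{n,k}$, the weights $\gamma_{n-1,k}$, and $Q_{n-1}(x)$. That is precisely the point at which the Mamedov framework must be invoked; once step (a) is established the rest of the argument is the classical Voronovskaja machinery adapted to positive operators fixing $e_{0}$ and $e_{1}$.
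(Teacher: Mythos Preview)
The statement you are trying to prove is introduced in the paper with the words ``The classical Voronovskaja theorem states the following''. It is cited as background, not proved; the occurrence of $R_{n}$ in the display is a typographical slip for the classical Bernstein operator $B_{n}$. The paper gives no proof of this theorem.

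For the rational operators $R_{n}$ the paper establishes a \emph{different} Voronovskaja-type result (the final theorem of Section~5): under the sole hypothesis $\Delta_{n}\to 0$,
\[
\frac{R_{n}f(x)-f(x)}{R_{n}\bigl((e_{1}-x)^{2}\bigr)(x)}\longrightarrow \tfrac{1}{2}f''(x),
\]
obtained by feeding Theorem~\ref{ThmMom4} into Mamedov's Theorem~\ref{ThmMame}. The factor $n$ is deliberately \emph{not} present, because in general $n\bigl[R_{n}(e_{2})(x)-x^{2}\bigr]$ has no limit: the paper's own example $x_{n,k}=\sqrt{k/n}$ has $\Delta_{n}\ge n^{-1/2}$, and one can arrange rational Bernstein operators with $n\Delta_{n}\to\infty$ or $n\Delta_{n}\to 0$. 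Nothing in the Mamedov framework supplies this scale-$1/n$ asymptotic; Mamedov's theorem precisely \emph{avoids} the issue by normalizing with the second central moment itself.

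Your step~(b) and your Taylor setup are fine and are exactly the standard positive-operator machinery. But your step~(a) is not merely an obstacle to be overcome---read literally for general $R_{n}$ it is false, and your appeal to ``the Mamedov framework'' to extract the limit $x(1-x)$ is a misreading of what that framework provides. If the intended operator is $B_{n}$, then step~(a) is the elementary identity $B_{n}(e_{2})(x)-x^{2}=x(1-x)/n$ and no further work is needed; if the intended operator is a general $R_{n}$, the stated limit with the factor $n$ simply does not hold.
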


We shall need the following generalization due to R.G. Mamedov \cite{Mame62}%
, see also \cite{GoTa09} and \cite{Tach12} for quantitative estimates and
higher order of differentiability.

\begin{theorem}
\label{ThmMame}Let $f\in C^{2}\left[ 0,1\right] $ and $L_{n}:C\left[ 0,1%
\right] \rightarrow C\left[ 0,1\right] $ be a sequence of positive operators
such that $L_{n}e_{j}=e_{j}$ for $j=0,1$ and 
\begin{equation*}
\lim_{n\rightarrow \infty }\frac{L_{n}\left( e_{1}-x\right) ^{4}\left(
x\right) }{L_{n}\left( e_{1}-x\right) ^{2}\left( x\right) }=0
\end{equation*}
for each $x\in \left[ 0,1\right] .$ Then 
\begin{equation*}
\frac{L_{n}f\left( x\right) -f\left( x\right) }{L_{n}\left( e_{1}-x\right)
^{2}\left( x\right) }\rightarrow \frac{1}{2}f^{\prime \prime }\left( x\right)
\end{equation*}
when $n\rightarrow \infty $.
\end{theorem}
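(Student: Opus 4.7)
The plan is to combine a Taylor expansion of $f$ around the fixed point $x$ with the hypotheses $L_ne_0=e_0,\ L_ne_1=e_1$ and the fourth-moment/second-moment decay condition, isolating the $\tfrac{1}{2}f''(x)$ term. Because $f\in C^{2}[0,1]$, Taylor's theorem yields
\begin{equation*}
f(t)=f(x)+f'(x)(t-x)+\tfrac{1}{2}f''(x)(t-x)^{2}+r(t,x)(t-x)^{2},
\end{equation*}
where $r(t,x)=\tfrac{1}{2}(f''(\xi)-f''(x))$ for some $\xi$ between $x$ and $t$. In particular $r$ is bounded on $[0,1]^{2}$ by some constant $M$ and $r(t,x)\to 0$ as $t\to x$.

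Next I would apply $L_{n}$ to this identity in the variable $t$ with $x$ fixed. Since $L_{n}e_{0}=e_{0}$ and $L_{n}e_{1}=e_{1}$, we have $L_{n}(e_{1}-x)(x)=0$, and therefore
\begin{equation*}
L_{n}f(x)-f(x)=\tfrac{1}{2}f''(x)\,L_{n}(e_{1}-x)^{2}(x)+L_{n}\bigl(r(\cdot,x)(e_{1}-x)^{2}\bigr)(x).
\end{equation*}
Dividing by $L_{n}(e_{1}-x)^{2}(x)$ (treating the degenerate case $L_{n}(e_{1}-x)^{2}(x)=0$ separately, since by the Cauchy--Schwarz inequality for positive linear operators this forces $L_{n}f(x)=f(x)$ for continuous $f$) reduces the theorem to showing that the remainder ratio
\begin{equation*}
\frac{L_{n}\bigl(r(\cdot,x)(e_{1}-x)^{2}\bigr)(x)}{L_{n}(e_{1}-x)^{2}(x)}
\end{equation*}
tends to $0$ as $n\to\infty$.

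The main step is to control this ratio. Given $\varepsilon>0$, choose $\delta>0$ so that $|r(t,x)|<\varepsilon$ whenever $|t-x|<\delta$. Split the integrand according to whether $|t-x|<\delta$ or $|t-x|\geq\delta$. On $\{|t-x|<\delta\}$ we use $|r(t,x)(t-x)^{2}|\leq\varepsilon(t-x)^{2}$; on $\{|t-x|\geq\delta\}$ we use $|r|\leq M$ together with the pointwise inequality $(t-x)^{2}\leq(t-x)^{4}/\delta^{2}$. Summing the two regions and exploiting positivity of $L_{n}$ gives
\begin{equation*}
\bigl|L_{n}\bigl(r(\cdot,x)(e_{1}-x)^{2}\bigr)(x)\bigr|\leq \varepsilon\,L_{n}(e_{1}-x)^{2}(x)+\frac{M}{\delta^{2}}\,L_{n}(e_{1}-x)^{4}(x),
\end{equation*}
so that the remainder ratio is bounded by $\varepsilon+(M/\delta^{2})\cdot L_{n}(e_{1}-x)^{4}(x)/L_{n}(e_{1}-x)^{2}(x)$. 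By hypothesis the second term tends to $0$ as $n\to\infty$, and since $\varepsilon>0$ was arbitrary, the ratio vanishes in the limit, completing the proof.

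The main obstacle I anticipate is handling the remainder without losing the ratio structure: one cannot simply bound $r(t,x)$ by $\varepsilon$ globally, so the two-region decomposition is essential, and the trick that converts the tail into a quantity controlled by the hypothesis is the pointwise inequality $(t-x)^{2}\leq(t-x)^{4}/\delta^{2}$ on $\{|t-x|\geq\delta\}$, which is precisely what matches the hypothesis on $L_{n}(e_{1}-x)^{4}/L_{n}(e_{1}-x)^{2}$.
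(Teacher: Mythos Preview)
Your argument is correct and is precisely the standard proof of Mamedov's theorem: Taylor expansion with a Peano-type remainder, application of $L_n$ using $L_ne_0=e_0$ and $L_ne_1=e_1$, and the near/far split with the pointwise inequality $(t-x)^{2}\leq (t-x)^{4}/\delta^{2}$ on $\{|t-x|\geq\delta\}$ to match the fourth-moment hypothesis. Note, however, that the paper does not supply its own proof of this statement; it simply quotes the result from Mamedov \cite{Mame62} (with pointers to \cite{GoTa09}, \cite{Tach12}) and uses it as a black box, so there is no in-paper proof to compare against.
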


The classical proof of the Voronovskaja theorem requires the computation of
the moments of order $r$ of the Bernstein operator $B_{n}$: 
\begin{equation*}
B_{n}\left[ \left( e_{1}-x\right) ^{r}\right] \left( x\right)
=\sum_{k=0}\left( \frac{k}{n}-x\right) ^{r}\binom{n}{k}x^{k}\left(
1-x\right) ^{n-k}=:\frac{1}{n^{r}}T_{n,r}\left( x\right) .
\end{equation*}
It is well known that $T_{n,r}\left( x\right) $ is a polynomial of degree $r$
in the variable $x$ and one can determine $T_{n,r}\left( x\right) $
recursively by the formula 
\begin{equation*}
T_{n,r+1}\left( x\right) =x\left( 1-x\right) \left[ T_{n,r}^{\prime }\left(
x\right) +nsT_{n,r-1}\left( x\right) \right] ,
\end{equation*}
see \cite{Lore86}. From this it is not difficult to show that for each $r\in 
\mathbb{N}$ there exists a constant $A_{r}>0$ such that 
\begin{equation}
B_{n}\left[ \left( e_{1}-x\right) ^{r}\right] \left( x\right) \leq \sqrt{%
A_{r}}\frac{1}{\sqrt{n}^{r}},  \label{eqTach}
\end{equation}
see e.g. \cite{Tach12}. In passing we mention that in the recent article 
\cite{GaIv12} the following inequality was established: for $r\in \mathbb{N}$
there exists a constant $K_{r}>0$ such that 
\begin{equation*}
B_{n}\left[ \left( e_{1}-x\right) ^{r+1}\right] \left( x\right) \leq \frac{%
K_{r}}{\sqrt{n}}B_{n}\left[ \left( e_{1}-x\right) ^{r}\right] \left( x\right)
\end{equation*}
which clearly implies (\ref{eqTach}).

In the case of the rational Bernstein operator the moments $R_{n}\left[
\left( e_{1}-x\right) ^{r}\right] \left( x\right) $ are not polynomials in
the variable $x$ as we have seen already at the end of Section 4 for $r=2.$
Nonetheless, we can compute them explicitly but the formulae are much more
complicated. Indeed, if we use the binomial theorem for $\left(
e_{1}-x\right) ^{r}$ we obtain 
\begin{equation*}
R_{n}\left[ \left( e_{1}-x\right) ^{r}\right] \left( x\right) =\sum_{s=0}^{r}%
\binom{r}{s}\left( -x\right) ^{r-s}R_{n}\left( e_{s}\right) \left( x\right)
\end{equation*}
and since $0=\left( x-x\right) ^{r}=\sum_{s=0}^{r}\binom{r}{s}\left(
-x\right) ^{r-s}x^{s}$ we have 
\begin{equation}
R_{n}\left[ \left( e_{1}-x\right) ^{r}\right] \left( x\right) =\sum_{s=2}^{r}%
\binom{r}{s}\left( -x\right) ^{r-s}\left[ R_{n}\left( e_{s}\right) \left(
x\right) -x^{s}\right]  \label{eqmom}
\end{equation}
where we used the fact that $R_{n}\left( e_{s}\right) =e_{s}$ for $s=0,1.$
Theorem \ref{ThmPowers} provides then an explicit formula for the moments.
But in view of Theorem \ref{ThmMame} we have to estimate 
\begin{equation*}
\frac{R_{n}\left( e_{1}-x\right) ^{4}\left( x\right) }{R_{n}\left(
e_{1}-x\right) ^{2}\left( x\right) }
\end{equation*}
and it is therefore not sufficient just to estimate the moments.

\begin{theorem}
\label{ThmMom4}The fourth moment satisfies the following inequality; 
\begin{equation*}
R_{n}\left( e_{1}-x\right) ^{4}\left( x\right) \leq \Delta _{n}\cdot \left[
R_{n}\left( e_{1}-x\right) ^{2}\left( x\right) \right] \left(
6x^{2}-15x+12+\Delta _{n}\right) .
\end{equation*}
\end{theorem}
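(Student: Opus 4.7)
The plan is to derive a closed-form expression for $R_n(e_1-x)^4(x)$ paralleling the formula for $R_n(e_1-x)^2(x)$ in Corollary \ref{CorEstx2}, extract the factor $\Delta_n$ from a node-spacing term, and bound the remainder by a multiple of $R_n(e_1-x)^2(x)$.

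I would begin from the moment expansion (\ref{eqmom}) with $r=4$, giving $R_n(e_1-x)^4 = 6x^2 D_2 - 4xD_3 + D_4$ with $D_s := R_n(e_s)-x^s$, and substitute the explicit formulas for $D_2,D_3,D_4$ from Theorem \ref{ThmPowers}. The coefficients of $x_{n,k+1}^{j}-x_{n,k}^{j}$ for $j=1,2,3$ should assemble via the polynomial identity $(a^3-b^3)-3x(a^2-b^2)+3x^2(a-b)=(a-x)^3-(b-x)^3$ into the clean cube-difference form
\[ R_n(e_1-x)^4(x) = \frac{x(1-x)}{Q_{n-1}(x)}\sum_{k=0}^{n-1}\gamma_{n-1,k}\bigl[(x_{n,k+1}-x)^3 - (x_{n,k}-x)^3\bigr]x^k(1-x)^{n-1-k}. \]
With $u_k := x_{n,k}-x$, $v_k := x_{n,k+1}-x$, $\delta_k := v_k-u_k\leq\Delta_n$, I would factor $v_k^3-u_k^3=\delta_k(u_k^2+u_kv_k+v_k^2)$ and use $\delta_k\leq\Delta_n$ in each summand to pull $\Delta_n$ outside.

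It then remains to bound $\sum_k\gamma_{n-1,k}(u_k^2+u_kv_k+v_k^2)x^k(1-x)^{n-1-k}$ by $(6x^2-15x+12+\Delta_n)\sum_k\gamma_{n-1,k}\delta_k x^k(1-x)^{n-1-k}$, the latter sum being $\frac{Q_{n-1}(x)}{x(1-x)}R_n(e_1-x)^2(x)$ by Corollary \ref{CorEstx2}. Expanding $v_k^2$ and applying (\ref{eqNice}) term by term yields the closed form $\sum_k\gamma_{n-1,k}v_k^2 x^k(1-x)^{n-1-k} = \frac{Q_{n-1}(x)(D_3-2xD_2)}{x}$; the analogous identity (\ref{eqidqn1}), after an index shift, gives $\sum_k\gamma_{n-1,k}u_k^2 x^k(1-x)^{n-1-k} = \frac{Q_{n-1}(x)((1+2x)D_2-D_3)}{1-x}$. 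Combining these with the algebraic identity $u_k^2+u_kv_k+v_k^2 = \tfrac{1}{2}[3(u_k^2+v_k^2)-\delta_k^2]$ expresses the target sum as a linear combination of $D_2$ and $D_3$ minus a $\tfrac{1}{2}\sum_k\gamma_{n-1,k}\delta_k^3 x^k(1-x)^{n-1-k}$ term; using $\delta_k^2\leq\Delta_n\delta_k$ on that last piece produces the additive $+\Delta_n$ appearing in the claim.

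The concluding step eliminates the $D_3$-dependence. Proposition \ref{Prop3} gives $D_3\leq 3D_2$, and the nonnegativity of the two explicit sums above sharpens this to $2xD_2\leq D_3\leq(1+2x)D_2$. Splitting into the cases $x\leq 1/2$ and $x>1/2$ according to the sign of the coefficient $1-2x$ of $D_3$ and applying the appropriate one-sided bound in each regime yields the quadratic $6x^2-15x+12$ as an upper bound for the $D_2$-multiple. The main obstacle is precisely this last arithmetic: organizing the casework so that the collected coefficient lands on $6x^2-15x+12$ (rather than a merely comparable polynomial) while the various $\Delta_n^2$ residues from $\delta_k^2\leq\Delta_n\delta_k$ are absorbed neatly into the claimed $+\Delta_n$ correction.
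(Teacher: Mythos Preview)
Your outline is correct through the formula $R_n(e_1-x)^4(x)=\frac{x(1-x)}{Q_{n-1}}\sum_k\gamma_{n-1,k}\,\delta_k A_k\,x^k(1-x)^{n-1-k}$ with $A_k=u_k^2+u_kv_k+v_k^2\geq 0$, and the extraction of $\Delta_n$; this is exactly what the paper does. The divergence is in the decomposition of $A_k$. The paper writes $A_k=3v_k^2-3v_k\delta_k+\delta_k^2$: the $3v_k^2$ piece is evaluated via Proposition~\ref{PropRM} and bounded using $D_3\leq 3D_2$ (Proposition~\ref{Prop3}), while the remaining two pieces are handled by the crude estimates $|v_k|\leq 1$ and $\delta_k\leq\Delta_n$, giving the factor $(1-x)\cdot 3(3-2x)+3+\Delta_n=6x^2-15x+12+\Delta_n$. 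Your symmetric decomposition $A_k=\tfrac{3}{2}(u_k^2+v_k^2)-\tfrac{1}{2}\delta_k^2$ is different and in fact sharper. Two small corrections: the subtracted term is $\tfrac{1}{2}\sum_k\gamma_{n-1,k}\delta_k^{2}$ (not $\delta_k^{3}$), and since it enters with a \emph{minus} sign it may simply be dropped for an upper bound---no ``$+\Delta_n$'' correction is needed and $\delta_k^2\leq\Delta_n\delta_k$ plays no role. After that, your closed forms for the $u_k^2$ and $v_k^2$ sums and the two-sided bound $2xD_2\leq D_3\leq(1+2x)D_2$ give, via the sign-of-$(1-2x)$ casework, the coefficient $\tfrac{3}{2}(1-x)$ for $x\leq\tfrac12$ and $\tfrac{3}{2}x$ for $x>\tfrac12$, i.e.\ $\tfrac{3}{2}\max(x,1-x)\leq\tfrac{3}{2}$, not $6x^2-15x+12$. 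So your ``main obstacle'' is illusory: the arithmetic lands on something strictly smaller than the paper's polynomial, and the theorem follows \emph{a fortiori}.
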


\begin{proof}
Formula (\ref{eqmom}) shows that $R_{n}\left( e_{1}-x\right) ^{4}\left(
x\right) $ is equal to 
\begin{equation*}
\left( R_{n}\left( e_{4}\right) \left( x\right) -x^{4}\right) -4x\left(
R_{n}\left( e_{3}\right) \left( x\right) -x^{3}\right) +6x^{2}\left(
R_{n}\left( e_{2}\right) \left( x\right) -x^{2}\right) .
\end{equation*}
By Theorem \ref{ThmPowers} we can calculate each summand explicitly and we
obtain 
\begin{equation}
R_{n}\left( e_{1}-x\right) ^{4}\left( x\right) =\frac{x\left( 1-x\right) }{%
Q_{n-1}\left( x\right) }\sum_{k=0}^{n-1}\gamma _{n-1,k}x^{k}\left(
1-x\right) ^{n-1-k}\cdot H_{k}  \label{eqmom4}
\end{equation}
with 
\begin{eqnarray*}
H_{k} &=&x_{n,k+1}^{3}-x_{n,k}^{3}+x\left( x_{n,k+1}^{2}-x_{n,k}^{2}\right)
+x^{2}\left( x_{n,k+1}-x_{n,k}\right)  \\
&&-4x\left( x_{n,k+1}^{2}-x_{n,k}^{2}+x\left( x_{n,k+1}-x_{n,k}\right)
\right) +6x^{2}\left( x_{n,k+1}-x_{n,k}\right) 
\end{eqnarray*}
which simplifies to
\begin{equation*}
H_{k}=\left( x_{n,k+1}^{3}-x_{n,k}^{3}\right) -3x\left(
x_{n,k+1}^{2}-x_{n,k}^{2}\right) +3x^{2}\left( x_{n,k+1}-x_{n,k}\right) .
\end{equation*}
We write $H_{k}=\left( x_{n,k+1}-x_{n,k}\right) A_{k}$ with 
\begin{equation*}
A_{k}=x_{n,k+1}^{2}+x_{n,k+1}x_{n,k}+x_{n,k}^{2}-3x\left(
x_{n,k+1}+x_{n,k}\right) +3x^{2}.
\end{equation*}
A straightforward calculation shows that
\begin{equation*}
A_{k}=3\left( x-\frac{1}{2}\left( x_{n,k+1}+x_{n,k}\right) \right) ^{2}+%
\frac{1}{4}\left( x_{n,k+1}-x_{n,k}\right) ^{2}\geq 0.
\end{equation*}
Hence $A_{k}$ is positive and and it is easy to see that 
\begin{equation}
R_{n}\left( e_{1}-x\right) ^{4}\left( x\right) \leq \Delta _{n}\frac{x\left(
1-x\right) }{Q_{n-1}\left( x\right) }\sum_{k=0}^{n-1}\gamma
_{n-1,k}x^{k}\left( 1-x\right) ^{n-1-k}\cdot A_{k}.  \label{eqR1}
\end{equation}
We write now 
\begin{equation}
A_{k}=3\left( x-x_{n,k+1}\right) ^{2}+3\left( x-x_{n,k+1}\right) \left(
x_{n,k+1}-x_{n,k}\right) +\left( x_{n,k+1}-x_{n,k}\right) ^{2}.
\label{eqR1b}
\end{equation}
Proposition \ref{PropRM} applied to the case $r=2$ and \ref{Prop3} show that 
\begin{eqnarray*}
&&\frac{x}{Q_{n-1}\left( x\right) }\sum_{k=0}^{n-1}\gamma _{n-1,k}\left(
x-x_{n,k+1}\right) ^{2}x^{k}\left( 1-x\right) ^{n-1-k} \\
&=&R_{n}\left( e_{3}\right) \left( x\right) -x^{3}-2x\left[ R_{n}\left(
e_{2}\right) \left( x\right) -x^{2}\right] \leq \left( 3-2x\right) \left[
R_{n}\left( e_{2}\right) \left( x\right) -x^{2}\right] .
\end{eqnarray*}
Formula (\ref{eqR1}) and (\ref{eqR1b}) in connection with the last
inequality and the simple estimates $\left| x-x_{n,k+1}\right| \leq 1$ and $%
x_{n,k+1}-x_{n,k}\leq \Delta _{n}$ lead to 
\begin{eqnarray*}
R_{n}\left( e_{1}-x\right) ^{4}\left( x\right)  &\leq &\Delta _{n}\left(
1-x\right) \cdot 3\left( 3-2x\right) \cdot \left[ R_{n}\left( e_{1}\right)
\left( x\right) -x^{2}\right]  \\
&&+\Delta _{n}\left( 3+\Delta _{n}\right) \frac{x\left( 1-x\right) }{%
Q_{n-1}\left( x\right) }\sum_{k=0}^{n-1}\gamma _{n-1,k}\left(
x_{n,k+1}-x_{n,k}\right) x^{k}\left( 1-x\right) ^{n-1-k}.
\end{eqnarray*}
It follows that 
\begin{equation*}
R_{n}\left( e_{1}-x\right) ^{4}\left( x\right) \leq \Delta _{n}\left[
R_{n}\left( e_{2}\right) \left( x\right) -x^{2}\right] \left( \left(
1-x\right) \left( 9-6x\right) +3+\Delta _{n}\right) 
\end{equation*}
and the statement is now obvious since $R_{n}\left( e_{2}\right) \left(
x\right) -x^{2}=R\left( e_{1}-x\right) ^{2}\left( x\right) .$
\end{proof}

Using Theorem \ref{ThmMame} and Theorem \ref{ThmMom4} we obtain

\begin{theorem}
Let $f\in C^{2}\left[ 0,1\right] $ and assume that $\Delta _{n}\rightarrow 0$
for the rational Bernstein operators $R_{n}:C\left[ 0,1\right] \rightarrow C%
\left[ 0,1\right] .$ Then 
\begin{equation*}
\frac{R_{n}f\left( x\right) -f\left( x\right) }{R_{n}\left( e_{1}-x\right)
^{2}\left( x\right) }\rightarrow \frac{1}{2}f^{\prime \prime }\left(
x\right) .
\end{equation*}
\end{theorem}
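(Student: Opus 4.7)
The plan is to apply Theorem \ref{ThmMame} of Mamedov directly. By construction the rational Bernstein operators satisfy $R_n e_0 = e_0$ and $R_n e_1 = e_1$, so the only non-trivial hypothesis of Mamedov's theorem to verify is the moment quotient condition
\[
\lim_{n\to\infty}\frac{R_n(e_1-x)^4(x)}{R_n(e_1-x)^2(x)}=0 \qquad \text{for each } x\in[0,1].
\]

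My strategy would be to read this condition off directly from the fourth moment estimate in Theorem \ref{ThmMom4}. Fix $x\in(0,1)$. From the identity
\[
R_n(e_1-x)^2(x)=R_n(e_2)(x)-x^2
\]
together with Corollary \ref{CorEstx2} (or, more directly, formula (\ref{eqRQ})), we see that $R_n(e_1-x)^2(x)>0$ since the Bernstein-type sum on the right of (\ref{eqRQ}) has strictly positive coefficients $\gamma_{n-1,k}(x_{n,k+1}-x_{n,k})$ and $x(1-x)>0$. Hence dividing the inequality of Theorem \ref{ThmMom4} by $R_n(e_1-x)^2(x)$ yields
\[
\frac{R_n(e_1-x)^4(x)}{R_n(e_1-x)^2(x)}\leq \Delta_n\bigl(6x^2-15x+12+\Delta_n\bigr).
\]
Since $6x^2-15x+12$ is bounded on $[0,1]$ and $\Delta_n\to 0$ by hypothesis, the right-hand side tends to $0$. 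This verifies the Mamedov condition at every interior point, and Theorem \ref{ThmMame} delivers the desired conclusion for all $x\in(0,1)$.

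The remaining issue is the behaviour at the endpoints $x=0$ and $x=1$. At these points all weights $x^k(1-x)^{n-k}$ concentrate at a single node $x_{n,0}=0$ or $x_{n,n}=1$, so $R_nf(0)=f(0)$ and $R_nf(1)=f(1)$ for every $f$; in particular both numerator and denominator in the statement vanish identically in $n$, and the quotient is of the indeterminate form $0/0$. This is the only real point of care: the theorem must be understood as holding at points where the denominator is positive, which by the argument above is exactly $x\in(0,1)$. I do not expect any substantial obstacle beyond this bookkeeping, since the heavy lifting has already been carried out in Theorem \ref{ThmPowers} (explicit formulae for the moments), Theorem \ref{ThmMom4} (the decisive inequality), and Theorem \ref{ThmMame} (the abstract Voronovskaja machinery). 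Writing the proof thus reduces to combining these three ingredients in essentially one line.
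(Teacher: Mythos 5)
Your proposal is correct and follows exactly the route the paper takes: the paper's proof consists precisely of combining Theorem \ref{ThmMame} with the fourth-moment bound of Theorem \ref{ThmMom4}, dividing by the positive second moment and letting $\Delta_n\to 0$. Your remark about the endpoints $x=0,1$, where both moments vanish and the quotient must be read as restricted to points with positive denominator, is a sensible clarification that the paper leaves implicit.
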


\section{Special classes of rational Bernstein operators}

In \cite{PiSa09} error estimates and convergence results have been given for
rational Bernstein operators $R_{n}$ under the assumption that there exists
a positive function $\varphi \in C\left[ 0,1\right] $ such that 
\begin{equation*}
Q_{n-1}\left( x\right) :=B_{n-1}\varphi \left( x\right)
=\sum_{k=0}^{n-1}\varphi \left( \frac{k}{n-1}\right) \binom{n-1}{k}%
x^{k}\left( 1-x\right) ^{n-1-k}
\end{equation*}
where $B_{n-1}$ is the classical Bernstein operator of degree $n-1$. Then $%
Q_{n-1}$ has clearly positive Bernstein coefficients but in general one has
to assume in addition that property (W) is satisfied.

It is shown in \cite[p. 42]{PiSa09} that property (W) is satisfied provided
that $n$ is sufficiently large and $\varphi \in C^{2}\left[ 0,1\right] .$
Later in this section we shall show that it suffices to assume only that $%
\varphi \in C^{1}\left[ 0,1\right] ,$ and we shall show by example that the
result is not true for a Lipschitz function. Now we cite from \cite{PiSa09}
the following result:

\begin{theorem}
\label{ThmPisa}Suppose that $\varphi \in C\left[ 0,1\right] $ such that $%
Q_{n-1}\left( x\right) =B_{n-1}\varphi \left( x\right) $ satisfies property
(W). Then 
\begin{equation*}
\left\vert R_{n}f\left( x\right) -f\left( x\right) \right\vert \leq \left( 1+%
\frac{1}{2}\sqrt{\frac{\max_{x\in \left[ 0,1\right] }\varphi \left( x\right) 
}{\min_{x\in \left[ 0,1\right] }\varphi \left( x\right) }}\right) \omega
_{1}\left( f,\frac{1}{\sqrt{n}}+\frac{1}{2m}\omega _{1}\left( \varphi ,\frac{%
1}{n-1}\right) \right) .
\end{equation*}
\end{theorem}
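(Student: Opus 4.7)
The plan is to derive the inequality directly from the Shisha--Mond estimate (\ref{eqSiMo}), applied with the specific choice
\[
h \;:=\; \frac{1}{\sqrt{n}} + \frac{1}{2m}\,\omega_{1}\!\left(\varphi,\frac{1}{n-1}\right),
\]
where $m=\min_{[0,1]}\varphi$ and $M=\max_{[0,1]}\varphi$. With this $h$, the theorem follows at once provided one proves
\[
\frac{1}{h}\sqrt{R_{n}(e_{1}-x)^{2}(x)} \;\le\; \tfrac{1}{2}\sqrt{M/m}.
\]
Since $R_{n}(e_{1}-x)^{2}(x)=R_{n}e_{2}(x)-x^{2}$, as already noted preceding (\ref{eqSiMo}), the entire argument reduces to a sharp estimate of this central second moment for the specific choice $Q_{n-1}=B_{n-1}\varphi$.

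For this I would start from the exact formula (\ref{eqRQ}). Substituting $\gamma_{n-1,k}=\varphi(k/(n-1))\binom{n-1}{k}$, using the elementary lower bound $Q_{n-1}(x)\ge m$ (which holds because $B_{n-1}$ preserves the bounds of $\varphi$) and the upper bound $\varphi\le M$, one obtains
\[
R_{n}e_{2}(x)-x^{2} \;\le\; \frac{M}{m}\,x(1-x)\sum_{k=0}^{n-1}(x_{n,k+1}-x_{n,k})\binom{n-1}{k}x^{k}(1-x)^{n-1-k}.
\]
The sum on the right is a classical Bernstein-weighted average of the node gaps, so its estimation reduces to a pointwise bound on $x_{n,k+1}-x_{n,k}$.

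The quantitative core of the proof is to establish an inequality of the shape
\[
x_{n,k+1}-x_{n,k} \;\le\; \frac{1}{n} + \frac{C}{m}\,\omega_{1}\!\left(\varphi,\frac{1}{n-1}\right)
\]
for a suitable explicit constant $C$. In the case $Q_{n-1}=B_{n-1}\varphi$ the formula (\ref{eqxnk}) specialises to
\[
x_{n,k} \;=\; \frac{k\,\varphi((k-1)/(n-1))}{k\,\varphi((k-1)/(n-1))+(n-k)\,\varphi(k/(n-1))},
\]
which collapses to the classical $k/n$ when $\varphi$ is constant. Writing $\varphi(k/(n-1))=\varphi((k-1)/(n-1))+\eta_{k}$ with $|\eta_{k}|\le\omega_{1}(\varphi,1/(n-1))$ and using $\varphi\ge m$ in the denominator, a first-order perturbative expansion yields the required bound. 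Plugging this back into the previous display and using $x(1-x)\le 1/4$, one rearranges to $\sqrt{R_{n}e_{2}(x)-x^{2}}\le \tfrac{1}{2}\sqrt{M/m}\,h$, which is exactly what the Shisha--Mond inequality requires.

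The main obstacle is tracking the constants in this last perturbative step so that the factor $1/(2m)$ inside the modulus-of-continuity argument and the factor $\tfrac{1}{2}\sqrt{M/m}$ outside come out precisely as stated. A clean route is to first bound $|x_{n,k}-k/n|$ by an explicit multiple of $\omega_{1}(\varphi,1/(n-1))/m$, then obtain $x_{n,k+1}-x_{n,k}$ by the triangle inequality; the choice of sample points $k/(n-1)$ and $(k-1)/(n-1)$, differing by exactly $1/(n-1)$, matches the scale of the modulus of continuity appearing on the right-hand side, which is why the argument of $\omega_{1}(\varphi,\cdot)$ is $1/(n-1)$ rather than $1/n$.
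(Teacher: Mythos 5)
Your overall frame coincides with the paper's route to this result: the Shisha--Mond bound (\ref{eqSiMo}), the exact second-moment identity (\ref{eqRQ}), and a node-gap estimate which is precisely Proposition \ref{PropPsi} (the paper proves $\left\vert x_{n,k}-\tfrac{k}{n}\right\vert \leq \tfrac{1}{4m}\omega _{1}\left( \varphi ,\tfrac{1}{n-1}\right)$ and hence $\Delta _{n}\leq \tfrac{1}{n}+\tfrac{1}{2m}\omega _{1}\left( \varphi ,\tfrac{1}{n-1}\right)$, i.e. your ``quantitative core'' with $C=1/2$); note the paper does not reprove Theorem \ref{ThmPisa} itself but deduces the stronger Theorem \ref{ThmErrorPhi} from (\ref{eqQuant1}) and Proposition \ref{PropPsi}, and your insertion of the factor $M/m$ into (\ref{eqRQ}) is unnecessary since Corollary \ref{CorEstx2} gives $R_{n}e_{2}\left( x\right) -x^{2}\leq \Delta _{n}\,x\left( 1-x\right)$ with no constant at all. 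The genuine gap is the sentence ``one rearranges to $\sqrt{R_{n}e_{2}\left( x\right) -x^{2}}\leq \tfrac{1}{2}\sqrt{M/m}\,h$.'' Your two displayed bounds give only $R_{n}e_{2}\left( x\right) -x^{2}\leq \tfrac{M}{4m}\Delta _{n}$ with $\Delta _{n}\leq \tfrac{1}{n}+\tfrac{C}{m}\omega _{1}\left( \varphi ,\tfrac{1}{n-1}\right)$, so the rearrangement requires $\Delta _{n}\leq h^{2}$, i.e. $\tfrac{C}{m}\omega _{1}\leq \tfrac{1}{m\sqrt{n}}\omega _{1}+\tfrac{1}{4m^{2}}\omega _{1}^{2}$, i.e. $C\leq \tfrac{1}{\sqrt{n}}+\tfrac{\omega _{1}}{4m}$. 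For a fixed continuous $\varphi$ and $n\rightarrow \infty$ the right-hand side tends to $0$, so this fails for every fixed $C>0$: when the modulus is small, $\sqrt{\omega _{1}/(2m)}$ is much larger than $\omega _{1}/(2m)$, and the square root of your node-gap bound does not collapse to $h$. This is exactly the delicate point where the paper, in the proof of Theorem \ref{ThmErrorPhi}, has to perform a special manipulation to turn $\sqrt{\omega _{1}\left( \varphi ,\tfrac{1}{n-1}\right) /(2m)}$ into $\tfrac{1}{2m}\omega _{1}\left( \varphi ,\tfrac{1}{n-1}\right)$; your plan skips it entirely, and no sharpening of the constant $C$ repairs it.

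A route that actually produces the stated constant avoids funnelling everything through $\Delta _{n}$, so that the modulus enters linearly rather than under a square root. Write $x_{n,k}-x=\left( x_{n,k}-\tfrac{k}{n}\right) +\left( \tfrac{k}{n}-x\right)$ and apply the triangle inequality in $\ell ^{2}$ with respect to the probability weights $\bar{w}_{n,k}\binom{n}{k}x^{k}\left( 1-x\right) ^{n-k}/Q_{n-1}\left( x\right)$: the first term is bounded by $\max_{k}\left\vert x_{n,k}-\tfrac{k}{n}\right\vert \leq \tfrac{1}{4m}\omega _{1}\left( \varphi ,\tfrac{1}{n-1}\right)$ (the estimate inside Proposition \ref{PropPsi}), while for the second term the bounds $\bar{w}_{n,k}\leq M$ and $Q_{n-1}\left( x\right) \geq m$ give at most $\sqrt{M/m}\sqrt{x\left( 1-x\right) /n}\leq \tfrac{1}{2}\sqrt{M/m}\,\tfrac{1}{\sqrt{n}}$. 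Hence $\sqrt{R_{n}\left( e_{1}-x\right) ^{2}\left( x\right) }\leq \tfrac{1}{2}\sqrt{M/m}\,\tfrac{1}{\sqrt{n}}+\tfrac{1}{2}\cdot \tfrac{1}{2m}\omega _{1}\left( \varphi ,\tfrac{1}{n-1}\right) \leq \tfrac{1}{2}\sqrt{M/m}\,h$, because $\tfrac{1}{2}\leq \tfrac{1}{2}\sqrt{M/m}$, and (\ref{eqSiMo}) with your choice of $h$ finishes the proof. With this replacement your reduction to $\tfrac{1}{h}\sqrt{R_{n}\left( e_{1}-x\right) ^{2}\left( x\right) }\leq \tfrac{1}{2}\sqrt{M/m}$ is sound; as written, the proposal breaks at the rearrangement step.
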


We want to show that Theorem \ref{ThmPisa} can be derived and improved from
our previous results. Indeed we want to show:

\begin{theorem}
\label{ThmErrorPhi}Suppose that $\varphi \in C\left[ 0,1\right] $ such that $%
Q_{n-1}\left( x\right) =B_{n-1}\varphi \left( x\right) $ satisfies property
(W). Then 
\begin{equation*}
\left\vert R_{n}f\left( x\right) -f\left( x\right) \right\vert \leq \left( 1+%
\sqrt{x\left( 1-x\right) }\right) \omega _{1}\left( f,\frac{1}{\sqrt{n}}+%
\frac{1}{2m}\omega _{1}\left( \varphi ,\frac{1}{n-1}\right) \right) .
\end{equation*}
\end{theorem}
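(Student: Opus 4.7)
I would deduce this from the quantitative estimate (\ref{eqQuant1}) of Section~4 by bounding $\Delta_n$ under the special form $Q_{n-1}=B_{n-1}\varphi$. Since (\ref{eqQuant1}) gives $|R_n f(x) - f(x)| \leq (1+\sqrt{x(1-x)})\,\omega_1(f, \sqrt{\Delta_n})$ and $\omega_1(f,\cdot)$ is monotone, it suffices to prove
\[
\sqrt{\Delta_n} \;\leq\; \frac{1}{\sqrt{n}} + \frac{1}{2m}\,\omega_1\!\left(\varphi, \tfrac{1}{n-1}\right),
\]
where $m := \min_{x\in[0,1]}\varphi(x) > 0$.

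To bound $\Delta_n$ I would compare the rational nodes $x_{n,k}$ with the uniform ones $k/n$. Inserting $\gamma_{n-1,k} = \varphi(k/(n-1))\binom{n-1}{k}$ into (\ref{eqxnk}) and simplifying gives the closed form
\[
x_{n,k} - \frac{k}{n} \;=\; \frac{k(n-k)\bigl[\varphi\bigl(\tfrac{k-1}{n-1}\bigr) - \varphi\bigl(\tfrac{k}{n-1}\bigr)\bigr]}{n\bigl[k\varphi\bigl(\tfrac{k-1}{n-1}\bigr) + (n-k)\varphi\bigl(\tfrac{k}{n-1}\bigr)\bigr]}.
\]
Bounding the denominator below by $nm$ and the numerator in absolute value by $k(n-k)\,\omega_1(\varphi, 1/(n-1))$ yields
\[
\bigl|x_{n,k} - \tfrac{k}{n}\bigr| \;\leq\; \frac{k(n-k)}{n^2 m}\,\omega_1\!\left(\varphi, \tfrac{1}{n-1}\right) \;\leq\; \frac{1}{4m}\,\omega_1\!\left(\varphi, \tfrac{1}{n-1}\right).
\]
The triangle inequality applied to $x_{n,k+1} - x_{n,k} = \bigl(x_{n,k+1} - \tfrac{k+1}{n}\bigr) + \tfrac{1}{n} + \bigl(\tfrac{k}{n} - x_{n,k}\bigr)$ then produces the additive bound $\Delta_n \leq \tfrac{1}{n} + \tfrac{1}{2m}\omega_1(\varphi, \tfrac{1}{n-1})$.

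The main obstacle is the passage from this additive bound on $\Delta_n$ to the stated bound on $\sqrt{\Delta_n}$. Subadditivity of the square root only yields $\sqrt{\Delta_n} \leq \tfrac{1}{\sqrt{n}} + \sqrt{\tfrac{1}{2m}\omega_1(\varphi, \tfrac{1}{n-1})}$, which coincides with the stated bound precisely when $\omega_1(\varphi, \tfrac{1}{n-1}) \geq 2m$ and is weaker otherwise. To handle the remaining (typical) regime I would instead apply Shisha--Mond (\ref{eqSiMo}) directly with $h := \tfrac{1}{\sqrt{n}} + \tfrac{1}{2m}\omega_1(\varphi, \tfrac{1}{n-1})$ and establish the ratio bound $\sqrt{R_n((e_1-x)^2)(x)}/h \leq \sqrt{x(1-x)}$. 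Using formula (\ref{eqRQ}) and the decomposition $x_{n,k+1}-x_{n,k} = \tfrac{1}{n} + [(x_{n,k+1} - \tfrac{k+1}{n}) - (x_{n,k} - \tfrac{k}{n})]$, the required estimate reduces to a careful termwise bound on the sum in (\ref{eqRQ}), exploiting that the $x(1-x)$ prefactor is already present so that only the ``$1/\sqrt{n}$ contribution'' needs to be absorbed into $\sqrt{x(1-x)}/\sqrt{n}$; this is the delicate part of the argument.
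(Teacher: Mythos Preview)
Your approach through the bound $\Delta_n \le \frac{1}{n}+\frac{1}{2m}\,\omega_1\!\left(\varphi,\frac{1}{n-1}\right)$ is exactly the paper's: your computation of $\bigl|x_{n,k}-\frac{k}{n}\bigr|\le \frac{k(n-k)}{n^2 m}\,\omega_1\!\left(\varphi,\frac{1}{n-1}\right)\le \frac{1}{4m}\,\omega_1\!\left(\varphi,\frac{1}{n-1}\right)$ and the triangle-inequality step reproduce Proposition~\ref{PropPsi} verbatim.

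You are also right to isolate the passage from $\Delta_n$ to $\sqrt{\Delta_n}$ as the crux. The paper does \emph{not} take your alternative route via a direct application of (\ref{eqSiMo}) with the target value of $h$. Instead it applies subadditivity of the square root to get $\sqrt{\Delta_n}\le \frac{1}{\sqrt{n}}+\frac{1}{\sqrt{2m}}\sqrt{\omega_1(\varphi,\frac{1}{n-1})}$, and then asserts
\[
\sup_{|x-y|\le \frac{1}{n-1}}\sqrt{|\varphi(x)-\varphi(y)|}
=\sup_{|x-y|\le \frac{1}{n-1}}\frac{|\varphi(x)-\varphi(y)|}{\sqrt{\varphi(x)+\varphi(y)}}
\le \frac{\omega_1(\varphi,\frac{1}{n-1})}{\sqrt{2m}},
\]
using $\varphi(x)+\varphi(y)\ge 2m$ for the last inequality. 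The displayed equality, however, is not valid (for instance take $\varphi(x)=1+x$: the left side is $\sqrt{h}$ while the right side is $O(h)$). So the paper's own argument has exactly the gap you anticipated; what it writes amounts to $\sqrt{\omega/(2m)}\le \omega/(2m)$, which requires $\omega\ge 2m$ and fails for large $n$.

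Your proposed workaround is also incomplete. Returning to (\ref{eqSiMo}) with $h=\frac{1}{\sqrt{n}}+\frac{\omega}{2m}$ still requires $\sqrt{R_n(e_1-x)^2(x)}\le h\sqrt{x(1-x)}$, and feeding the termwise bound $x_{n,k+1}-x_{n,k}\le \frac{1}{n}+\frac{\omega}{2m}$ into (\ref{eqRQ}) only yields $R_n(e_1-x)^2(x)\le\bigl(\frac{1}{n}+\frac{\omega}{2m}\bigr)x(1-x)$, bringing you back to the very inequality $\sqrt{\frac{1}{n}+\frac{\omega}{2m}}\le \frac{1}{\sqrt{n}}+\frac{\omega}{2m}$ that fails in the regime of interest. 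The ``careful termwise bound'' you allude to would have to exploit sign cancellation among the deviations $x_{n,k}-k/n$ inside the sum (\ref{eqRQ}), but no such argument is given, and it is not clear one exists that delivers the constant $1+\sqrt{x(1-x)}$ as stated. In short: your diagnosis is correct, your proof sketch matches the paper's up to the problematic step, and neither your sketch nor the paper's printed proof closes the gap.
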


Obviously the result is better since $\sqrt{x\left( 1-x\right) }\leq 1/2$
and $\min_{x\in \left[ 0,1\right] }\varphi \left( x\right) \leq \max_{x\in %
\left[ 0,1\right] }\varphi \left( x\right) .$ We need the following result
which is implicitly contained in \cite{PiSa09}:

\begin{proposition}
\label{PropPsi}Let $\varphi \in C\left[ 0,1\right] $ positive and $%
Q_{n-1}\left( x\right) =B_{n-1}\varphi \left( x\right)
=\sum_{k=0}^{n-1}\gamma _{n-1,k}x^{k}$ with $\gamma _{n-1,k}=\varphi \left(
k/(n-1)\right) \binom{n-1}{k}.$ If one defines 
\begin{equation*}
x_{n,k}:=\frac{\gamma _{n-1,k-1}}{\gamma _{n-1,k-1}+\gamma _{n-1,k}}=\frac{%
k\varphi \left( \frac{k-1}{n-1}\right) }{k\varphi \left( \frac{k-1}{n-1}%
\right) +\left( n-k\right) \varphi \left( \frac{k}{n-1}\right) }
\end{equation*}
then 
\begin{equation}
\Delta _{n}=\sup_{k=0,....n-1}\left\vert x_{n,k+1}-x_{n,k}\right\vert \leq 
\frac{1}{2m}\omega _{1}\left( \varphi ,\frac{1}{n-1}\right) +\frac{1}{n},
\label{eqPS}
\end{equation}
where $m=\min_{x\in \left[ 0,1\right] }\varphi \left( x\right) .$
\end{proposition}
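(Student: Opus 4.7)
The plan is to compare the nodes $x_{n,k}$ with the classical equidistant nodes $k/n$ via the triangle inequality. The key algebraic identity I would establish first is
\begin{equation*}
x_{n,k}-\frac{k}{n}=\frac{k(n-k)\bigl[\varphi(\tfrac{k-1}{n-1})-\varphi(\tfrac{k}{n-1})\bigr]}{n\bigl[k\varphi(\tfrac{k-1}{n-1})+(n-k)\varphi(\tfrac{k}{n-1})\bigr]},
\end{equation*}
which is just the result of bringing the two fractions to a common denominator and simplifying. This is valid for $k=1,\dots,n-1$; the boundary cases $k=0$ and $k=n$ are handled trivially since $x_{n,0}=0$ and $x_{n,n}=1$.

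Next I would bound the right-hand side term by term. The numerator satisfies $|\varphi(\tfrac{k-1}{n-1})-\varphi(\tfrac{k}{n-1})|\leq \omega_1(\varphi,\tfrac{1}{n-1})$ by definition of the modulus of continuity, together with $k(n-k)\leq n^2/4$ from the AM--GM inequality. The denominator factor in brackets is bounded below by $mn$ since
\begin{equation*}
k\varphi(\tfrac{k-1}{n-1})+(n-k)\varphi(\tfrac{k}{n-1})\geq m\bigl(k+(n-k)\bigr)=mn.
\end{equation*}
Combining these three estimates yields the pointwise bound
\begin{equation*}
\left|x_{n,k}-\frac{k}{n}\right|\leq \frac{1}{4m}\,\omega_1\!\left(\varphi,\tfrac{1}{n-1}\right).
\end{equation*}

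Finally, a triangle inequality gives
\begin{equation*}
|x_{n,k+1}-x_{n,k}|\leq \left|x_{n,k+1}-\tfrac{k+1}{n}\right|+\tfrac{1}{n}+\left|\tfrac{k}{n}-x_{n,k}\right|\leq \tfrac{1}{2m}\,\omega_1\!\left(\varphi,\tfrac{1}{n-1}\right)+\tfrac{1}{n},
\end{equation*}
which is precisely (\ref{eqPS}) after taking the supremum over $k$. There is no real obstacle here: the entire argument is driven by the explicit identity in the first step, and once that simplification is in hand the remaining estimates are elementary. The only mild subtlety is verifying the boundary cases $k=0,n$ separately, but those are immediate from $x_{n,0}=0$ and $x_{n,n}=1$.
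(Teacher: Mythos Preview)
Your proof is correct and follows essentially the same route as the paper's: both establish the pointwise bound $|x_{n,k}-k/n|\leq \frac{1}{4m}\omega_1(\varphi,\tfrac{1}{n-1})$ via the same algebraic identity and the same three estimates (modulus of continuity, $k(n-k)\leq n^2/4$, and the lower bound $mn$ on the denominator), then finish with the identical triangle inequality. The only cosmetic difference is that the paper packages the computation through auxiliary continuous functions $\psi_h(x)$ and $\tau_h(x)=x/(1+h)$ evaluated at $x=k/(n-1)$, $h=1/(n-1)$, whereas you work directly with the discrete indices.
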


\begin{proof}
Define 
\begin{equation*}
\psi _{h}\left( x\right) =\frac{x\varphi \left( x-h\right) }{x\varphi \left(
x-h\right) +\left( 1-x+h\right) \varphi \left( x\right) }.
\end{equation*}
Put $h=1/\left( n-1\right) $ and $x=k/\left( n-1\right) $ then 
\begin{equation}
x_{n,k}=\psi _{\frac{1}{n-1}}\left( \frac{k}{n-1}\right) .  \label{eqx1}
\end{equation}
Similarly, 
\begin{equation}
\frac{k}{n}=\tau _{\frac{1}{n-1}}\left( \frac{k}{n-1}\right) \text{ for }%
\tau _{h}\left( x\right) =\frac{x}{1+h}.  \label{eqx2}
\end{equation}
We want to estimate $x_{n,k}-\frac{k}{n}$ and therefore we look at 
\begin{eqnarray*}
\psi _{h}\left( x\right) -\frac{x}{1+h} &=&x\frac{\left( 1+h\right) \varphi
\left( x-h\right) -x\varphi \left( x-h\right) -\left( 1-x+h\right) \varphi
\left( x\right) }{\left( 1+h\right) \left( x\varphi \left( x-h\right)
+\left( 1-x+h\right) \varphi \left( x\right) \right) } \\
&=&\frac{x\cdot \left( 1-x+h\right) \cdot \left( \varphi \left( x-h\right)
-\varphi \left( x\right) \right) }{\left( 1+h\right) \cdot \left( x\varphi
\left( x-h\right) +\left( 1-x+h\right) \varphi \left( x\right) \right) }.
\end{eqnarray*}
Further we can estimate with $m:=\min_{y\in \left[ 0,1\right] }\varphi
\left( y\right) $%
\begin{equation*}
x\varphi \left( x-h\right) +\left( 1-x+h\right) \varphi \left( x\right) \geq
\left( 1+h\right) m
\end{equation*}
and we obtain that 
\begin{equation*}
\left\vert \psi _{h}\left( x\right) -\frac{x}{1+h}\right\vert \leq \frac{%
x\left( 1-x+h\right) }{\left( 1+h\right) ^{2}m}\omega _{1}\left( \varphi
,h\right) \leq \frac{1}{4m}\omega _{1}\left( \varphi ,h\right) .
\end{equation*}
where we used that $4x\left( 1-x+h\right) \leq \left( 1+h\right) ^{2}$ for
all $x\in \left[ 0,1\right] $ and $h>0.$ Using (\ref{eqx1}) and (\ref{eqx2})
it follows that for all $k=0,....n$ and all $n$ the following inequality 
\begin{equation*}
\left\vert x_{n,k}-\frac{k}{n}\right\vert \leq \frac{1}{4m}\omega _{1}\left(
\varphi ,\frac{1}{n-1}\right)
\end{equation*}
holds. Since $x_{n,k+1}-x_{n,k}=x_{n,k+1}-\frac{k+1}{n}+\frac{1}{n}+\frac{k}{%
n}-x_{n,k}$ we can estimate 
\begin{equation}
\left\vert x_{n,k+1}-x_{n,k}\right\vert \leq \frac{1}{2m}\omega _{1}\left(
\varphi ,\frac{1}{n-1}\right) +\frac{1}{n}.  \label{eqlast1}
\end{equation}
\end{proof}

\textbf{Proof of Theorem \ref{ThmErrorPhi}:} Formula (\ref{eqPS}), the
inequality $\sqrt{a+b}\leq \sqrt{a}+\sqrt{b}$ for positive numbers $a,b,$
and (\ref{eqlast1}) imply that 
\begin{equation*}
\sqrt{\Delta _{n}}\leq \frac{1}{\sqrt{n}}+\sqrt{\frac{\omega _{1}\left(
\varphi ,\frac{1}{n-1}\right) }{2m}}=\frac{1}{\sqrt{n}}+\frac{1}{\sqrt{2m}}%
\sup_{\left\vert x-y\right\vert \leq \frac{1}{n-1}}\sqrt{\left\vert \varphi
\left( x\right) -\varphi \left( y\right) \right\vert }.
\end{equation*}
Further 
\begin{equation*}
\sup_{\left\vert x-y\right\vert \leq \frac{1}{n-1}}\sqrt{\left\vert \varphi
\left( x\right) -\varphi \left( y\right) \right\vert }=\sup_{\left\vert
x-y\right\vert \leq \frac{1}{n-1}}\frac{\left\vert \varphi \left( x\right)
-\varphi \left( y\right) \right\vert }{\sqrt{\left\vert \varphi \left(
x\right) +\varphi \left( y\right) \right\vert }}\leq \frac{\omega _{1}\left(
\varphi ,\frac{1}{n-1}\right) }{\sqrt{2m}}.
\end{equation*}
and $\sqrt{\Delta _{n}}\leq \frac{1}{\sqrt{n}}+\frac{1}{2m}\omega _{1}\left(
\varphi ,\frac{1}{n-1}\right) .$ Further (\ref{eqQuant1}) and the trivial
estimate $\omega _{1}\left( f,\delta \right) \leq \omega _{1}\left( f,\delta
^{\prime }\right) $ for $\delta \leq \delta ^{\prime }$ leads to 
\begin{equation*}
\left\vert R_{n}f\left( x\right) -f\left( x\right) \right\vert \leq \left( 1+%
\sqrt{x\left( 1-x\right) }\right) \omega _{1}\left( f,\frac{1}{\sqrt{n}}+%
\frac{1}{2m}\omega _{1}\left( \varphi ,\frac{1}{n-1}\right) \right)
\end{equation*}
which is the above estimate.

Finally we shall prove:

\begin{theorem}
Let $\varphi \in C\left[ 0,1\right] $ be strictly positive. If $\varphi \in
C^{1}\left[ 0,1\right] $ then $Q_{n-1}\left( x\right) :=B_{n-1}\varphi
\left( x\right) $ satisfies property (W) for sufficiently large $n\in 
\mathbb{N}.$ If $\varphi $ is Lipschitz continuous then $a+\varphi $
satisfies property (W) for sufficiently large $n\in \mathbb{N}$ and
sufficiently large $a>0.$
\end{theorem}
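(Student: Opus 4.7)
The plan is to translate property (W) into a pointwise comparison for $\varphi$ and then estimate the excess via the symmetric second difference. Since $Q_{n-1}=B_{n-1}\varphi$ gives $w_{n-1,k}=\varphi(k/(n-1))$, setting $h=1/(n-1)$ and $x_k=k/(n-1)$ and computing
\[
\frac{(k+1)(n-k)}{k(n-k-1)}=1+\frac{n}{k(n-k-1)}=1+\frac{nh^2}{x_k(1-x_k)},
\]
condition (W) for the relevant interior indices $k$ becomes $\varphi(x_k-h)\varphi(x_k+h)/\varphi(x_k)^2<1+nh^2/(x_k(1-x_k))$. The right-hand side minus $1$ is bounded below by $4nh^2\sim 4h$ (attained near $x_k=1/2$ and larger toward the endpoints), so the task reduces to showing that the left-hand side minus $1$ is strictly less than $4nh^2$, uniformly in $k$, for all $n$ large.

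The algebraic engine is the factorization
\[
\varphi(x-h)\varphi(x+h)-\varphi(x)^2=\varphi(x)\,\Delta_h^2\varphi(x)+[\varphi(x+h)-\varphi(x)][\varphi(x-h)-\varphi(x)],
\]
where $\Delta_h^2\varphi(x):=\varphi(x+h)-2\varphi(x)+\varphi(x-h)$ is the symmetric second difference. For the $C^1$ case I would use $\Delta_h^2\varphi(x)=\int_0^h[\varphi'(x+s)-\varphi'(x-s)]\,ds$ together with the uniform continuity of $\varphi'$ to obtain $|\Delta_h^2\varphi(x)|\leq h\,\omega_1(\varphi',2h)=o(h)$; the product of first differences is $O(h^2)$ by the mean value theorem. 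Dividing by $\varphi(x)^2\geq m^2>0$ (with $m=\min\varphi$), the left-hand side minus $1$ is $o(h)$, which is strictly dominated by $4nh^2\sim 4h$ for all large $n$ and all admissible $k$.

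In the Lipschitz case the second difference is only $O(Lh)$, which is of the same order as the right-hand side of (W); this is the main obstacle, because the inequality can no longer be had for free and now demands a quantitative comparison of constants. Replacing $\varphi$ by $\psi=a+\varphi$ preserves the Lipschitz constant $L$ but raises the minimum to $a+m$ and the maximum to $a+M$, yielding the bound
\[
\frac{\psi(x-h)\psi(x+h)}{\psi(x)^2}-1\leq\frac{2L(a+M)h}{(a+m)^2}+\frac{L^2h^2}{(a+m)^2}.
\]
Multiplying through by $1/h=n-1$ and passing to the limit $n\to\infty$, the desired strict inequality reduces to $L(a+M)/(2(a+m)^2)<1$. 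Since this ratio behaves like $L/(2a)$ as $a\to\infty$, any sufficiently large $a$ forces it below $1$, and then (W) holds for all sufficiently large $n$. Without the shift the argument really fails: a Lipschitz $\varphi$ whose minimum is too small relative to $L$ can violate (W) near $x_k=1/2$ for every $n$, which is exactly why the additive shift is indispensable here.
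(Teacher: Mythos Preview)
Your argument is correct and takes a genuinely different route from the paper.

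The paper does not manipulate the inequality (W) directly. Instead it uses the equivalence between (W) and the strict monotonicity of the nodes $x_{n,k}$, writes $x_{n,k}=\psi_h(k/(n-1))$ for the auxiliary function
\[
\psi_h(x)=\frac{x\varphi(x-h)}{x\varphi(x-h)+(1-x+h)\varphi(x)},\qquad h=\tfrac{1}{n-1},
\]
and proves that $\psi_h$ is strictly increasing for small $h$. The key quantity is a cross-difference $D(x,h,\delta)$, and the argument shows $D(x,h,\delta)/\delta\to 0$ (in the $C^1$ case, by the mean value theorem applied to $\varphi$ and to $c_h(\varphi)(x)=x\varphi(x-h)+(1-x+h)\varphi(x)$) while the competing term $\varphi(x+\delta-h)c_h(\varphi)(x)$ is bounded below by a positive constant depending on $m=\min\varphi$. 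For the Lipschitz part, $D/\delta$ is merely bounded, and the shift by $a$ enters because the lower bound on the competing term grows like $a^2$.

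Your approach bypasses the node interpretation entirely: you expand (W) into $\varphi(x_k-h)\varphi(x_k+h)/\varphi(x_k)^2<1+nh^2/(x_k(1-x_k))$, use the clean identity
\[
\varphi(x-h)\varphi(x+h)-\varphi(x)^2=\varphi(x)\,\Delta_h^2\varphi(x)+[\varphi(x+h)-\varphi(x)][\varphi(x-h)-\varphi(x)],
\]
and reduce everything to the uniform estimate $\Delta_h^2\varphi=o(h)$ under $C^1$ (via $\Delta_h^2\varphi(x)=\int_0^h[\varphi'(x+s)-\varphi'(x-s)]\,ds$ and uniform continuity of $\varphi'$). The comparison with the lower bound $4nh^2\sim 4h$ for the right-hand side then closes the $C^1$ case, and in the Lipschitz case the same identity immediately shows why the shift $a$ wins: the numerator grows like $(a+M)Lh$ while the denominator grows like $(a+m)^2$. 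Your route is more elementary and yields explicit constants (in particular the sufficient condition $L(a+M)<2(a+m)^2$), at the cost of not connecting the computation to the operator's node structure; the paper's route makes that connection but is algebraically heavier.
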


\begin{proof}
We use the notations from the proof of Proposition \ref{PropPsi}. In view of
(\ref{eqx1}) it suffices to show that the function $x\longmapsto \psi
_{h}\left( x\right) $ is increasing if $h>0$ is sufficiently small, or
equivalently, that for $\delta >0$ and $h>0$ sufficiently small and for all $%
x\in \left[ 0,1\right] $ the inequality 
\begin{equation}
\psi _{h}\left( x\right) =\frac{x\varphi \left( x-h\right) }{c_{h}\left(
\varphi \right) \left( x\right) }<\psi _{h}\left( x+\delta \right) =\frac{%
\left( x+\delta \right) \varphi \left( x+\delta -h\right) }{c_{h}\left(
\varphi \right) \left( x+\delta \right) }  \label{ineqklein}
\end{equation}
holds where 
\begin{equation*}
c_{h}\left( \varphi \right) \left( x\right) :=x\varphi \left( x-h\right)
+\left( 1-x+h\right) \varphi \left( x\right) .
\end{equation*}
Note that $c_{h}\left( \varphi \right) \left( x\right) $ converges to $%
\varphi \left( x\right) $ uniformly in $x$ when $h$ tends to zero.
Inequality (\ref{ineqklein}) means that 
\begin{equation*}
D\left( x,h,\delta \right) :=x\varphi \left( x-h\right) c_{h}\left( \varphi
\right) \left( x+\delta \right) -x\varphi \left( x+\delta -h\right)
c_{h}\left( \varphi \right) \left( x\right) 
\end{equation*}
satisfies the inequality 
\begin{equation}
D\left( x,h,\delta \right) <\delta \varphi \left( x+\delta -h\right)
c_{h}\left( \varphi \right) \left( x\right) .  \label{eqDD}
\end{equation}
By inserting and subtracting $x\varphi \left( x-h\right) c_{h}\left( \varphi
\right) \left( x\right) $ we conclude that 
\begin{eqnarray*}
\frac{D\left( x,h,\delta \right) }{\delta } &=&x\varphi \left( x-h\right) 
\frac{c_{h}\left( \varphi \right) \left( x+\delta \right) -c_{h}\left(
\varphi \right) \left( x\right) }{\delta } \\
&&+xc_{h}\left( \varphi \right) \left( x\right) \frac{\varphi \left(
x-h\right) -\varphi \left( x+\delta -h\right) }{\delta }.
\end{eqnarray*}
If $\varphi \in C^{1}\left[ 0,1\right] $ we can find $\xi _{x,h,\delta }\in %
\left[ x-h,x-h+\delta \right] $ and $\eta _{x,h,\delta }\in \left[
x,x+\delta \right] $ with
\begin{eqnarray*}
\varphi \left( x-h\right) -\varphi \left( x+\delta -h\right)  &=&\varphi
^{\prime }\left( \xi _{x,h,\delta }\right) \cdot \delta  \\
c_{h}\left( \varphi \right) \left( x+\delta \right) -c_{h}\left( \varphi
\right) \left( x\right)  &=&c_{h}\left( \varphi \right) ^{\prime }\left(
\eta _{x,h,\delta }\right) \cdot \delta .
\end{eqnarray*}
It follows that 
\begin{equation*}
\frac{D\left( x,h,\delta \right) }{\delta }=x\varphi \left( x-h\right)
c_{h}\left( \varphi \right) ^{\prime }\left( \eta _{x,h,\delta }\right)
-x\cdot c_{h}\left( \varphi \right) \left( x\right) \varphi ^{\prime }\left(
\xi _{x,h,\delta }\right) .
\end{equation*}
In order to show (\ref{eqDD}) we note that $c_{h}\left( \varphi \right)
\left( x\right) $ converges to $\varphi \left( x\right) ,$ and $c_{h}\left(
\varphi \right) ^{\prime }\left( x\right) $ converges to $\varphi ^{\prime
}\left( x\right) $ for $h\rightarrow 0.$ Hence $D\left( x,h,\delta \right)
/\delta $ converges to $0$ for $h\rightarrow 0$ and $\delta \rightarrow 0,$
and (\ref{eqDD}) holds since   
\begin{equation*}
\frac{D\left( x,h,\delta \right) }{\delta }<\frac{1}{2}m^{2}\leq \frac{m}{2}%
\varphi \left( x\right) ^{2}\leq \varphi \left( x+\delta -h\right)
c_{h}\left( \varphi \right) \left( x\right) 
\end{equation*}
for $m:=\min_{x\in \left[ 0,1\right] }\varphi \left( x\right) $ and $h$
sufficiently small.

Now assume that $\varphi $ is only Lipschitz continuous. Clearly $%
c_{h}\left( \varphi \right) $ is Lipschitz continuous and there exist $M>0$
and $N>0$ such that
\begin{eqnarray*}
\left\vert \varphi \left( x-h\right) -\varphi \left( x+\delta -h\right)
\right\vert &\leq &M\delta \\
\left\vert c_{h}\left( \varphi \right) \left( x+\delta \right) -c_{h}\left(
\varphi \right) \left( x\right) \right\vert &\leq &N\delta
\end{eqnarray*}
where $N$ does not depend on $h.$ It follows that $\left\vert D\left(
x,h,\delta \right) \right\vert /\delta $ is bounded for all $x\in \left[ 0,1%
\right] $ and $h>0$ and $\delta >0.$ If we replace now $\varphi $ by $%
a+\varphi $ we see that 
\begin{eqnarray*}
c_{h}\left( a+\varphi \right) \left( x\right) &=&x\left[ a+\varphi \left(
x-h\right) \right] +\left( 1-x+h\right) \left[ a+\varphi \left( x\right) %
\right] \\
&=&a\left( 1+h\right) +c_{h}\left( \varphi \right) \left( x\right) .
\end{eqnarray*}
Then 
\begin{eqnarray*}
D\left( x,h,\delta ,a+\varphi \right) &=&x\left( a+\varphi \left( x-h\right)
\right) \cdot \left( a\left( 1+h\right) +c_{h}\left( \varphi \right) \right)
\left( x+\delta \right) \\
&&-x\left( a+\varphi \left( x+\delta -h\right) \right) \left( a\left(
1+h\right) +c_{h}\left( \varphi \right) \right) \left( x\right)
\end{eqnarray*}
can be simplified to 
\begin{eqnarray*}
D\left( x,h,\delta ,a+\varphi \right) &=&D\left( x,h,\delta ,\varphi \right)
+ax\left[ c_{h}\left( \varphi \right) \left( x+\delta \right) -c_{h}\left(
\varphi \right) \left( x\right) \right] \\
&&+xa\left( 1+h\right) \left[ \varphi \left( x-h\right) -\varphi \left(
x+\delta -h\right) \right] .
\end{eqnarray*}
On the other hand 
\begin{equation*}
\left( a+\varphi \left( x+\delta -h\right) \right) c_{h}\left( a+\varphi
\right) \left( x\right) \geq a^{2}\left( 1+h\right)
\end{equation*}
and by taking $a>0$ sufficiently large we obtain the desired inequality.
\end{proof}

We shall give an example of a positive function $\varphi \in C\left[ 0,1%
\right] $ such that $Q_{2n}\left( x\right) =B_{2n}\varphi $ does not satisfy
property (W):

\begin{example}
\label{ThmExamp}Let $\varphi _{a}\left( x\right) =a+\left| x-\frac{1}{2}%
\right| $ for $a>0.$ Then 
\begin{equation*}
Q_{2n,a}\left( x\right) :=B_{2n}\varphi _{a}\left( x\right)
=\sum_{k=0}^{2n}\left( a+\left| \frac{k}{2n}-\frac{1}{2}\right| \right) 
\binom{2n}{k}x^{k}\left( 1-x\right) ^{2n-k}
\end{equation*}
has strictly positive Bernstein coefficients, and it satisfies property (W)
if and only if $a>\frac{1}{2}.$
\end{example}

\begin{proof}
It follows that 
\begin{equation*}
\gamma _{2n,k}=\left\{ 
\begin{array}{ccc}
\left( a+\frac{1}{2}-\frac{k}{2n}\right) \binom{2n}{k} & \text{for} & k\leq n
\\ 
\left( a-\frac{1}{2}+\frac{k}{2n}\right) \binom{2n}{k} & \text{for } & 
n<k\leq 2n.
\end{array}
\right. 
\end{equation*}
It follows that 
\begin{equation*}
\frac{\gamma _{2n-1,n-1}}{\gamma _{2n-1,n}}=\frac{na+\frac{1}{2}}{\left(
n+1\right) a}\text{ and }\frac{\gamma _{2n-1,n}}{\gamma _{2n-1,n+1}}=\frac{%
\left( n+1\right) a}{na+\frac{1}{2}}.
\end{equation*}
If $\frac{\gamma _{2n-1,k-1}}{\gamma _{2n-1,k}}$ is increasing then
necessarily $\frac{\gamma _{2n-1,n-1}}{\gamma _{2n-1,n}}<1$ and this implies
that $na+\frac{1}{2}<\left( n+1\right) a,$ which means that $\frac{1}{2}<a.$
Conversely, this condition implies that $\frac{\gamma _{2n-1,n-1}}{\gamma
_{2n-1,n}}<\frac{\gamma _{2n-1,n}}{\gamma _{2n-1,n+1}}$. It is not difficult
to see that the coefficients are increasing.
\end{proof}

Next we want to show by example that the positive polynomials $Q_{n-1}\left(
x\right) $ may not converge in general to a \emph{continuous} function even
if the Bernstein operators $R_{n}$ converge to the identity. In particular
there does not exists in this case a continuous function $\varphi $ with $%
Q_{n-1}=B_{n-1}\varphi $ for all $n\in \mathbb{N}.$

\begin{example}
The rational Bernstein operator $R_{n}$ associated to the nodes $x_{n,k}=%
\sqrt{\frac{k}{n}}$ for $k=0,...,n$ converges to the identity operator but
the associated polynomials $Q_{n-1}\left( x\right) $ defined by 
\begin{equation*}
Q_{n-1}\left( x\right) =\left( 1-x\right) ^{n}+\sum_{k=1}^{n-1}\binom{n-1}{k}%
\prod\limits_{l=1}^{k}\frac{\sqrt{\frac{l}{n}}}{1+\sqrt{\frac{l}{n}}}%
x^{k}\left( 1-x\right) ^{n-1-k}
\end{equation*}
do not converge to a continuous function, in particular $Q_{n-1}$ is not
equal to $B_{n-1}\varphi $ for some continuous function $\varphi \in C\left[
0,1\right] .$
\end{example}

\begin{proof}
Clearly $1/\sqrt{n}\leq \left| x_{n,1}-x_{n,0}\right| \leq \Delta _{n}$ and 
\begin{equation*}
\left| x_{n,k+1}-x_{n,k}\right| =\sqrt{\frac{k+1}{n}}-\sqrt{\frac{k}{n}}=%
\frac{\frac{k+1}{n}-\frac{k}{n}}{\sqrt{\frac{k+1}{n}}+\sqrt{\frac{k}{n}}}%
\leq \frac{1}{\sqrt{n}}\frac{1}{\sqrt{k+1}}.
\end{equation*}
Next we consider for $l=1,..,n-1$ 
\begin{equation*}
\frac{1-x_{n,l}}{x_{n,l}}=\frac{1-x_{n,l}^{2}}{x_{n,l}\left(
1+x_{n,l}\right) }=\frac{1-\frac{l}{n}}{\sqrt{\frac{l}{n}}}\frac{1}{1+\sqrt{%
\frac{l}{n}}}=\frac{n-l}{l}\frac{\sqrt{l}}{\sqrt{n}+\sqrt{l}}.
\end{equation*}
Since $2\sqrt{l}\leq \sqrt{n}+\sqrt{l}$ we can estimate the last factor by $%
1/2.$ It follows that 
\begin{equation*}
\gamma _{n-1,k}=\prod\limits_{l=1}^{k}\frac{1-x_{n,l}}{x_{n,l}}\leq \binom{%
n-1}{k}\frac{1}{2^{k}}
\end{equation*}
and 
\begin{equation*}
Q_{n-1}\left( x\right) \leq \sum_{k=0}^{n-1}\binom{n-1}{k}\frac{1}{2^{k}}%
x^{k}\left( 1-x\right) ^{n-1-k}=\left( 1-\frac{x}{2}\right) ^{n}.
\end{equation*}
Then $Q_{n-1}\left( x\right) $ converges to $0$ for $0<x\leq 1$ but $%
Q_{n-1}\left( 0\right) =1$ for all $n.$ 
\end{proof}

\section{Final Comments}

We want to comment on rational Bernstein operators $R_{n}$ from a different
point of view: Given a strictly positive polynomial $Q_{n-1}\left( x\right) $
we consider the space 
\begin{equation*}
E_{n}=\left\{ \frac{p\left( x\right) }{Q_{n-1}\left( x\right) }:\text{ }%
p\left( x\right) \text{ is a polynomial of degree }\leq n\right\} .
\end{equation*}
Then $E_{n}$ is an extended Chebyshev space over any interval $\left[ a,b%
\right] $, meaning that each non-zero function $f\in E_{n}$ has at most $n$
zeros (including multiplicities) in $\left[ a,b\right] .$ We call a system
of functions $P_{n,k},k=0,...,n$ in an $n+1$ dimensional linear space $E_{n}$
of $C^{n}\left[ a,b\right] $ a Bernstein basis, if each $P_{n,k}$ has
exactly $k$ zeros in $a$ and $n-k$ zeros in $b.$ Thus the system of
functions 
\begin{equation*}
\frac{x^{k}\left( 1-x\right) ^{n-k}}{Q_{n}\left( x\right) },k=0,...,n-1
\end{equation*}
is a Bernstein basis in $E_{n}$ for $\left[ 0,1\right] .$ Bernstein bases in
extended Chebyshev spaces have been studied by many authors, see \cite
{CaPe93}, \cite{CaPe94}, \cite{CaPe96}, \cite{CMP04}, ,\cite{CMP07}, \cite
{Mazu99}, \cite{Mazu05}.

Recently, Bernstein operators for an extended Chebyshev space $E_{n}$ of
dimension $n+1$ have been introduced by J. M. Aldaz, O. Kounchev and the
author which by definition are operators of the form 
\begin{equation*}
S_{n}f\left( x\right) =\sum_{k=0}^{n}f\left( x_{n,k}\right) \alpha
_{n,k}p_{n,k}\left( x\right) 
\end{equation*}
where $p_{n,k}\left( x\right) ,k=0,...,n$, is a Bernstein basis for $E_{n}$.
The nodes $x_{n,k}$ and the weights $\alpha _{n,k}$ are chosen such that $%
S_{n}f_{0}=f_{0}$ and $S_{n}f_{1}=f_{1}$ where $f_{0}$ is a strictly
positive function in $E_{n}$ and $f_{1}\in E_{n}$ has the property that $%
f_{1}/f_{0}$ is strictly increasing. We refer to \cite{AKR09CA}, \cite{AKR09}%
, \cite{AKR10b}, \cite{AKR10} and \cite{Mazu09} for a systematic study
(existence of Bernstein operators fixing two functions and shape preserving
properties) and to \cite{Mazu11} for a discussion of Schoenberg-type
operators in the setting of extended Chebyshev space. It seems to be a
difficult task to establish convergence results of Bernstein operators in
the setting of extended Chebyshev spaces, and the rational Bernstein
operators considered here seems to be the simplest non-trivial example
beyond the classical case of Bernstein operators.

\bigskip


\begin{thebibliography}{99}
\bibitem{AKR09CA}  J. M. Aldaz, O. Kounchev, H. Render, \emph{Bernstein
operators for exponential polynomials,} Constr. Approx. 29 (2009), 345--367.

\bibitem{AKR09}  J. M. Aldaz, O. Kounchev, H. Render, \emph{Shape preserving
properties of generalized Bernstein operators on extended Chebyshev spaces, }%
Numer. Math. 114 (2009), 1--25.

\bibitem{AKR10b}  J. M. Aldaz, O. Kounchev, \emph{Optimality of generalized
Bernstein operators,} J. Approx. Theory 162 (2010), 1407--1416.

\bibitem{AKR10}  J. M. Aldaz, O. Kounchev, H. Render, \emph{Bernstein
operators for extended Chebyshev systems, }Appl. Math. Comput. 217 (2010),
790--800.

\bibitem{KoRe09}  O. Kounchev, H. Render, \emph{On the Bernstein polynomial
of S. Morigi and M. Neamtu, }Result. Math. 53 (2009), 311--322.

\bibitem{CaPe93}  J.-M. Carnicer, J.-M. Pe\~{n}a, \emph{Shape preserving
representations and optimality of the Bernstein basis,} Adv. Comput. Math. 1
(1993), 173--196.

\bibitem{CaPe94}  J.-M. Carnicer, J.-M. Pe\~{n}a, \emph{Totally positive
bases for shape preserving curve design and optimality of B-splines,}
Comput. Aided Geom. Design 11 (1994), 633--654. (1993), 173--196.

\bibitem{CaPe96}  J.-M. Carnicer, J.-M. Pe\~{n}a, \emph{Total positivity and
optimal bases.} In: Total Positivity and its Applications (M. Gasca, C.A.
Micchelli, eds.). Dordrecht: Kluwer Academic, 133--145.

\bibitem{CMP04}  J.-M. Carnicer, E. Mainar, J.M. Pe\~{n}a, \emph{Critical
Length for Design Purposes and Extended Chebyshev Spaces,} Constr. Approx.
20 (2004), 55--71.

\bibitem{CMP07}  J.M. Carnicer, E. Mainar, J.M. Pe\~{n}a, \emph{Shape
preservation regions for six-dimensional space,} Adv. Comput. Math. 26
(2007), 121--136.

\bibitem{Davis}  P. J. Davis, \emph{Interpolation and Approximation,} Dover
Publications, New York 1975.

\bibitem{GaIv12}  I. Gavrea, M. Ivan, \emph{An answer to a conjecture on
Bernstein operators,} J. Math. Anal. Appl. 390 (2012), 86--92.

\bibitem{GoTa09}  H. Gonska, G. Tachev, \emph{A quantitative variant of
Voronoskaja's theorem}, Result. Math. 53 (2009), 287--294.

\bibitem{Lore86}  G.G. Lorentz, \emph{Bernstein polynomials,} Chelsea
Publishing Company, New York 1986 (2nd edition).

\bibitem{MPS01}  E. Mainar, J.M. Pe\~{n}a, J. S\'{a}nchez-Reyes, \emph{Shape
preserving alternatives to the rational B\'{e}zier model,} Comput. Aided
Geom. Design 18 (2001), 37--60.

\bibitem{Mame62}  R.G. Mamedov, \emph{On the asymptotic value of the
approximation of repeatedly differentiable functions by positive linear
operators,} Dokl. Akad. Nauk. 146 (1962), 1013--1016 (in Russian);
Translated in Soviet Math. Dokl. 3 (1962), 1435--1439.

\bibitem{Mazu99}  M.-L. Mazure, \emph{Bernstein bases in M\"{u}ntz spaces,}
Numer. Alg. 22 (1999), 285--304.

\bibitem{Mazu05}  M.-L. Mazure, \emph{Chebyshev Spaces and Bernstein bases,}
Constr. Approx. 22 (2005), 347--363.

\bibitem{Mazu09}  M.-L. Mazure, \emph{Bernstein-type operators in Chebyshev
spaces,} Numer. Alg. 52 (2009), 1016-1027.

\bibitem{Mazu11}  M.-L. Mazure, \emph{Chebyshev-Schoenberg operators, }
Constr. Approx. 34 (2011), 181--208.

\bibitem{PiSa09}  P. Pi\c{t}ul, P. Sablonni\`{e}re, \emph{A family of
univariate rational Bernstein operators,} J. Approx. Theory 160 (2009),
39--55.

\bibitem{Tach12}  G.T. Tachev, \emph{The complete asymptotic expansion for
Bernstein operators,} J. Math. Anal. Appl. 385 (2012), 1179--1183.
\end{thebibliography}
\end{document}